\documentclass[11pt,a4paper,leqno]{amsart}

\usepackage[latin1]{inputenc}
\usepackage[T1]{fontenc}
\usepackage{amsfonts}
\usepackage{amsmath}
\usepackage{amssymb}
\usepackage{eurosym}
\usepackage{mathrsfs}
\usepackage{palatino}
\usepackage{color}
\usepackage{xcolor}
\usepackage{esint}
\usepackage[
    colorlinks=true,
    linkcolor=blue,
    citecolor=blue,
    urlcolor=blue
]{hyperref}

\usepackage{amsmath,amssymb,amsthm}
\usepackage{graphicx}
\usepackage{tikz}          % 绘图
\usepackage{pgfplots}      % 专业绘图
\pgfplotsset{compat=1.18}  % 设置pgfplots版本

\numberwithin{equation}{section}

\newcommand{\BMO}[0]{\operatorname{BMO}}

\newcommand{\loc}[0]{\operatorname{loc}}

\renewcommand{\Re}[0]{\operatorname{Re}}

\swapnumbers
\theoremstyle{plain}
\newtheorem{thm}[equation]{Theorem}
\newtheorem{lem}[equation]{Lemma}
\newtheorem{prop}[equation]{Proposition}
\newtheorem{cor}[equation]{Corollary}

\theoremstyle{definition}

\newtheorem{ass}[equation]{Assumption}

\theoremstyle{remark}
\newtheorem{rem}[equation]{Remark}

\title{Sparse bounds and joint oscillation for bilinear rough commutators}
\author{Yuhao Wu}

\address{Center for Applied Mathematics, Tianjin University, Weijin Road 92, 300072 Tianjin, China}
\email{\text{yuhao\_wu@tju.edu.cn}}

\makeatletter
\@namedef{subjclassname@2020}{%
  \textup{2020} Mathematics Subject Classification}
\makeatother

\subjclass[2020]{42B20, 42B25}
\keywords{Bilinear rough singular integrals,
iterated commutators, sparse domination,
joint oscillation, geometric non-degeneracy}

\begin{document}

\allowdisplaybreaks

\begin{abstract}
We study joint oscillation conditions for commutators of bilinear
rough singular integrals with bounded mean-zero angular kernels.
For second-order commutators with two possibly distinct complex-valued
symbols, we prove lower bounds under a geometric non-degeneracy
assumption. These bounds distinguish the commutation positions:
mixed commutators control the product of the separate quadratic
oscillations, while repeated-position commutators of the partial
transposes additionally control the quadratic average of the product
of the centered symbols.

We also establish sparse domination for iterated commutators,
retaining the symbol oscillations inside the local averages.
For each fixed commutator order, the sparse constant is
$O(s/(s-1))$ as the averaging exponent $s$ decreases to $1$.
The resulting joint oscillation conditions yield strong bounds
in the Banach range and weak bounds in the quasi-Banach range.
At the natural exponent triples, the necessary and sufficient
conditions differ by an arbitrarily small increase in the
oscillation exponents.
\end{abstract}

\maketitle

\section{Introduction and main results}

The boundedness of a commutator can encode the
oscillation of its symbol.
The theorem of Coifman, Rochberg, and Weiss
\cite{MR412721} establishes this connection for
the Riesz transforms and $\BMO$, while Bloom's
theorem \cite{MR805955} identifies the corresponding
weighted oscillation in a two-weight setting.
For iterated commutators involving distinct symbols,
however, boundedness need not force each symbol
to belong to $\BMO$.
The relevant conditions must account for the
interaction of their local oscillations.

Hyt\"onen, Li, and Oikari \cite{MR4143395}
developed this viewpoint for iterated commutators
of linear singular integrals.
Their estimates involve both products of separate
oscillation averages and averages of products
of centered symbols, with a small gap between
the exponents in the necessary and sufficient
conditions. The present paper studies this question for
bilinear rough singular integrals.
In this setting, there is an additional issue:
the two commutations may act in different input
positions or in the same position, and these
configurations have different algebraic structures.

Grafakos and Torres \cite{MR1880324} developed
the multilinear Calder\'on--Zygmund theory.
Sharp maximal estimates were obtained in
\cite{MR1979948}.
Multiple-weight inequalities were later established
in \cite{MR2483720}.
For iterated and generalized commutators,
P\'erez, Pradolini, Torres, and Trujillo-Gonz\'alez
\cite{PerezPradoliniTorresTrujillo2014}
and Xue and Yan \cite{XueYan2016}
established weighted and endpoint estimates.
The two-weight theory was further developed by
Kunwar and Ou \cite{KunwarOu2018} and
Li \cite{MR4453692}.
Zeng \cite{zeng2025offdiagonalbloomweightedestimates}
obtained off-diagonal two-weight estimates for
bilinear iterated commutators with a single symbol,
including lower bounds for real-valued symbols
and non-degenerate Calder\'on--Zygmund operators.
Here we study joint oscillation conditions for
two distinct complex-valued symbols and rough
bilinear kernels, with particular attention to
their dependence on the commutation positions.

Let $d\ge1$. We consider
\begin{equation}\label{equ1.1}
T_\Omega(f_1,f_2)(x)
=
\operatorname{p.v.}
\iint_{\mathbb R^d\times\mathbb R^d}
\frac{\Omega((y_1,y_2)/|(y_1,y_2)|)}
     {|(y_1,y_2)|^{2d}}
f_1(x-y_1)f_2(x-y_2)\,dy_1\,dy_2,
\end{equation}
where
\[
\Omega\in L^\infty(S^{2d-1}),
\qquad
\int_{S^{2d-1}}\Omega\,d\sigma=0.
\]
This class goes back to Coifman and Meyer
\cite{MR380244}.
Grafakos, He, and Honz\'ik \cite{MR3758426}
established boundedness for bounded angular
kernels without smoothness assumptions.
Subsequent developments include the multiplier
criteria of Grafakos, He, and Slav\'ikov\'a
\cite{MR4055166}, the improved
bounds of He and Park \cite{HePark2023},
and results near critical angular integrability
in \cite{DosidisSlavikova2024,
DosidisParkSlavikova2026}.
An alternative approach based on local Fourier
series of the inputs was developed by Bhojak
and Shrivastava \cite{BhojakShrivastava2025}.
In dimension one, Dan, Qin, and Xue
\cite{DanQinXue2026} obtained bounds under
a fractional geometric integrability condition.

Commutators with rough kernels also have
a substantial linear theory.
Hu \cite{MR1949046} proved $L^p$ estimates
under weak angular integrability assumptions.
Endpoint estimates were obtained by Lan, Tao,
and Hu \cite{MR4168195} and by Hu and Tao
\cite{MR4543811}.
These results illustrate the role of cancellation
and angular integrability when kernel smoothness
is unavailable.

Sparse domination provides a useful framework
for upper estimates in the rough setting.
The form methods developed in
\cite{MR3668591,MR3873113}
underlie Barron's sparse bounds for bilinear
rough singular integrals \cite{MR3690231}.
Grafakos, Wang, and Xue
\cite{GrafakosWangXue2022}
and Dan and Xue \cite{DanXue2025}
obtained sparse and weighted estimates under
weaker angular integrability assumptions.
Wang, Xue, and Duan \cite{MR4358246}
treated maximal truncations and also established
sparse estimates for bilinear rough commutators.
For commutators, sparse bounds that keep
the symbol oscillations explicit are particularly
useful; see \cite{MR3695871,
KunwarOu2018,LiSparse2018,MR4453692}.
We retain these oscillations inside the local
averages in order to estimate them by joint
quantities, rather than by separate $\BMO$ norms.

The lower estimates require a different analysis.
For bilinear commutators, necessary conditions
have been obtained through kernel tests and
weak factorization; see Chaffee \cite{Chaffee2016},
Li and Wick \cite{LiWick2017},
and Guo, Lian, and Wu \cite{GuoLianWu2020}.
Weighted necessity was studied by Wang
\cite{Wang2023Necessity}.
Hyt\"onen's non-degeneracy approach
\cite{MR4338459} and its bilinear development
by Oikari \cite{MR4647944}
provide further methods for recovering
symbol oscillation from commutator bounds.
For two distinct complex-valued symbols,
the product of the symbol differences has
no fixed sign, and the joint quantities
must be recovered directly.

The distinction between commutation positions
is already visible in the kernels.
Writing $x$ for the output and $y,z$ for
the inputs, mixed commutation produces
\[
(b_1(x)-b_1(y))(b_2(x)-b_2(z)),
\]
whereas two commutations in the first position
produce
\[
(b_1(x)-b_1(y))(b_2(x)-b_2(y)).
\]
The first expression involves two independently
varying input variables; the second couples
both symbols across the same pair of points.
The commutation positions thus lead to different
joint oscillation conditions.

Our lower bounds use a geometric non-degeneracy
condition adapted to tests with two nearby
inputs and a separated output.
The two kernel displacements then have a common
midpoint direction and small opposite perturbations.
We require positivity, after a fixed complex
rotation, uniformly in these perturbations
on a positive-measure set of directions.
This permits measurable angular dependence
without requiring an open cap of positivity.
A density-point argument converts the condition
into uniform integral estimates for the deficit
of the rotated kernel from a positive reference
value.
These estimates make it possible to absorb
the errors in tests for joint oscillation.
Partial transposition gives the corresponding
geometry for repeated commutations.

Under the geometric condition, we obtain a lower bound for the
mixed commutator from $L^2\times L^2$ to $L^1$, and lower bounds
for repeated-position commutators of the partial transposes from
$L^2\times L^\infty$ to $L^2$ and from
$L^\infty\times L^2$ to $L^2$.
The mixed bound controls $S_{2,2}$; the repeated-position bounds
also control $T_2$.

For the upper bounds, we prove sparse domination for iterated
commutators of every fixed order, with constant
$C_{d,k}s/(s-1)$ for $s>1$.
This yields strong estimates in the Banach range and weak
estimates in the quasi-Banach range, under joint oscillation
conditions with an arbitrarily small increase in the exponents.

We first state the two-sided comparison at the
natural exponent triples.
We then present the quantitative sparse estimate
and the general lower and upper bounds from which
this comparison follows.

Given a bilinear operator \(T\), define
\[
[b,T]_1(f_1,f_2)
:=
bT(f_1,f_2)-T(bf_1,f_2)
\]
and
\[
[b,T]_2(f_1,f_2)
:=
bT(f_1,f_2)-T(f_1,bf_2).
\]
For \(i\in\{1,2\}\), set
\[
C_b^{0,i}(T):=T,
\qquad
C_b^{m,i}(T):=[b,C_b^{m-1,i}(T)]_i,
\qquad m\geq1,
\]
and, for \(k_1,k_2\in\mathbb N_0\), write
\begin{equation}\label{equ4.15}
C_b^{k_{\{1,2\}}}(T)
:=
C_b^{k_1,1}\bigl(C_b^{k_2,2}(T)\bigr),
\qquad k=k_1+k_2.
\end{equation}
Commutators taken in different input positions commute, so the order
between the first- and second-position iterations is immaterial.

For two symbols, we use the following joint
oscillation quantities, following
\cite{MR4143395}.
For $1\le r,t,u<\infty$, define
\begin{equation}\label{eq:joint-oscillation-conditions}
\begin{aligned}
S_{r,t}(b_1,b_2)
&:=
\sup_Q
\langle |b_1-\langle b_1\rangle_Q|\rangle_{r,Q}
\langle |b_2-\langle b_2\rangle_Q|\rangle_{t,Q},\\
T_u(b_1,b_2)
&:=
\sup_Q
\langle
 |b_1-\langle b_1\rangle_Q|
 |b_2-\langle b_2\rangle_Q|
\rangle_{u,Q}.
\end{aligned}
\end{equation}
The quantity $S_{r,t}$ is the supremum of the
products of the separate oscillation averages,
whereas $T_u$ measures the product of the centered
symbols in a single average.
Both are nondecreasing in their exponents, and
H\"older's inequality gives $T_u\le S_{2u,2u}$.
By the John--Nirenberg inequality, they are finite
for all finite exponents if both symbols belong
to $\BMO$.
Their finiteness does not, in general, imply
individual $\BMO$ membership.

For a bilinear operator $T$ and two symbols, write
\[
C_{\vec b}^{i,j}(T)
:=[b_1,[b_2,T]_j]_i,
\qquad
(i,j)\in\{(1,2),(1,1),(2,2)\}.
\]
The following two-sided estimates summarize the
main consequences of our necessary and sufficient
conditions at the natural exponent triples.

\begin{cor}[Two-sided joint oscillation estimates]
\label{cor:two-sided-main}
Let $\Omega\in L^\infty(S^{2d-1})$ have mean zero
and satisfy Assumption~\ref{ass:rough-geometric-nondegeneracy}.
Let $b_1,b_2\in L^3_{\loc}(\mathbb R^d)$ be
complex-valued.

For every $\varepsilon>0$,
\begin{equation}\label{eq:mixed-joint-comparison}
\begin{aligned}
S_{2,2}(b_1,b_2)+T_1(b_1,b_2)
&\lesssim_\Omega
\bigl\|C_{\vec b}^{1,2}(T_\Omega)\bigr\|_
 {L^2\times L^2\to L^1}\\
&\lesssim_{d,\varepsilon}
\|\Omega\|_\infty
\Bigl(
S_{2+\varepsilon,2+\varepsilon}(b_1,b_2)
+T_{1+\varepsilon}(b_1,b_2)
\Bigr).
\end{aligned}
\end{equation}

If, in addition, $b_1b_2\in L^2_{\loc}(\mathbb R^d)$,
then
\begin{equation}\label{eq:diagonal-joint-comparison}
\begin{aligned}
S_{2,2}(b_1,b_2)+T_2(b_1,b_2)
&\lesssim_\Omega
\bigl\|C_{\vec b}^{1,1}(T_\Omega^{*2})\bigr\|_
 {L^2\times L^\infty\to L^2}
=
\bigl\|C_{\vec b}^{2,2}(T_\Omega^{*1})\bigr\|_
 {L^\infty\times L^2\to L^2}\\
&\lesssim_{d,\varepsilon}
\|\Omega\|_\infty
\Bigl(
S_{2+\varepsilon,2+\varepsilon}(b_1,b_2)
+T_{2+\varepsilon}(b_1,b_2)
\Bigr).
\end{aligned}
\end{equation}
\end{cor}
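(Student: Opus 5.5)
The corollary is a packaging of the paper's general lower and upper bounds at the natural exponent triples, and I will prove it by assembling three ingredients stated later: the mixed lower bound, the repeated-position lower bound for the partial transposes, and the sparse upper bound. The identity in the middle of \eqref{eq:diagonal-joint-comparison} I treat separately, as a purely algebraic symmetry.

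\textbf{Lower bounds.} For the mixed commutator I will invoke the lower bound under Assumption~\ref{ass:rough-geometric-nondegeneracy}. It gives $S_{2,2}(b_1,b_2)\lesssim_\Omega\|C_{\vec b}^{1,2}(T_\Omega)\|_{L^2\times L^2\to L^1}$. The term $T_1$ is free, since Cauchy--Schwarz gives $T_1\le S_{2,2}$. For the repeated-position case I use the $L^2\times L^\infty\to L^2$ lower bound for $C^{1,1}_{\vec b}(T_\Omega^{*2})$, which controls both $S_{2,2}$ and $T_2$. This is the step that needs the extra hypothesis $b_1b_2\in L^2_{\loc}$: the test functions produce the product of the centered symbols in $L^2$, so that product must be locally square integrable for the test to make sense.

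\textbf{The middle identity.} Write out the trilinear forms. The kernel of $T_\Omega^{*2}$ is that of $T_\Omega$ with the output variable and the second input exchanged, and similarly for $T_\Omega^{*1}$. Swapping the roles of the two input slots, together with the reflection $(y_1,y_2)\mapsto(y_2,y_1)$ of the angular variable, then identifies $C^{1,1}_{\vec b}(T_\Omega^{*2})(f_1,f_2)$ with $C^{2,2}_{\vec b}(T^{*1}_{\Omega})(f_2,f_1)$, up to relabelling the kernel. Since the norm is symmetric under this exchange, the two operator norms coincide. I expect to check the precise relabelling against the paper's convention for partial transposes, but the identity itself is bookkeeping.

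\textbf{Upper bounds.} Apply the sparse domination theorem for iterated commutators with $k=2$. It bounds the form $|\langle C(f_1,f_2),g\rangle|$ by $C_d\frac{s}{s-1}\|\Omega\|_\infty$ times a sum over a sparse family of local $L^s$ averages. In these averages the oscillations $b_i-\langle b_i\rangle_Q$ are distributed over $f_1$, $f_2$ and $g$ according to the commutation positions. I then estimate the averages by Hölder, putting the symbols in $L^{2+\varepsilon}$ (or $L^{1+\varepsilon}$, $L^{2+\varepsilon}$ for the product terms), and choose $s$ close to $1$ depending on $\varepsilon$.

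\begin{itemize}
\item \emph{Mixed case, $L^2\times L^2\to L^1$.} Pair with $g\in L^\infty$. The sparse terms look like $\langle (b_1-\langle b_1\rangle)f_1\rangle_{s,Q}\langle (b_2-\langle b_2\rangle)f_2\rangle_{s,Q}\langle g\rangle_{s,Q}$, plus terms where the oscillations sit on $g$. These are bounded by $S_{2+\varepsilon,2+\varepsilon}\langle f_1\rangle_{p,Q}\langle f_2\rangle_{p,Q}\|g\|_\infty$ with $p<2$, or by $T_{1+\varepsilon}$ times the same. Summing over the sparse family through the $L^2$ boundedness of $M_p$, $p<2$, gives the bound.
\item \emph{Repeated-position case, $L^2\times L^\infty\to L^2$, dual $g\in L^2$.} The product $(b_1-\langle b_1\rangle)(b_2-\langle b_2\rangle)$ may land on a single function. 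It is then controlled by $T_{2+\varepsilon}$ via $\langle \cdot\,\cdot f\rangle_{s}\le T_{2+\varepsilon}\langle f\rangle_{q}$ with $q<2$; when the two oscillations are split between two functions, $S_{2+\varepsilon,2+\varepsilon}$ controls the term.
\end{itemize}

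The main obstacle is that the sparse averages center the symbols on the sparse cube. Commutator expansions naturally produce $b(x)-b(y)$, which must be rewritten via $b-\langle b\rangle_Q$ at the correct cube. Within the upper-bound step, the delicate point is making the exponent bookkeeping land exactly on $2+\varepsilon$ and $1+\varepsilon$ (respectively $2+\varepsilon$), with constants depending on $\varepsilon$ through $s/(s-1)$.
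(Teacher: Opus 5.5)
\textbf{The middle identity.} Your lower bounds and your mixed upper bound follow the paper's assembly. For the mixed upper bound you use H\"older with $S_{2+\varepsilon,2+\varepsilon}$ and $T_{1+\varepsilon}$, then $M_r$ with $r<2$. Your argument for the middle identity in \eqref{eq:diagonal-joint-comparison}, however, fails. The output of $T_\Omega^{*2}$ is the second input variable $z$ of $K_\Omega(x,y,z)$, whereas the output of $T_\Omega^{*1}$ is the first input variable $y$. Exchanging the two input slots while keeping the output fixed therefore yields only
\[
C^{1,1}_{\vec b}(T_\Omega^{*2})(f_1,f_2)=C^{2,2}_{\vec b}(T_{\widetilde\Omega}^{*1})(f_2,f_1),
\qquad \widetilde\Omega(u,v)=\Omega(v,u).
\]
Your ``relabelling of the kernel'' is exactly this reflection, so nothing follows for $C^{2,2}_{\vec b}(T_\Omega^{*1})$ unless $\Omega$ is symmetric. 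The identity is a duality statement, not an input swap. Expanding on admissible tests gives
\begin{align*}
\bigl\langle C^{1,1}_{\vec b}(T_\Omega^{*2})(f,g),h\bigr\rangle
&=\bigl\langle T_\Omega(f,b_1b_2h)-T_\Omega(b_1f,b_2h)
 -T_\Omega(b_2f,b_1h)+T_\Omega(b_1b_2f,h),g\bigr\rangle\\
&=\bigl\langle C^{2,2}_{\vec b}(T_\Omega^{*1})(g,h),f\bigr\rangle .
\end{align*}
Both norms are then the supremum of this single trilinear form over $\|f\|_2\le1$, $\|h\|_2\le1$, $\|g\|_\infty\le1$. The $L^2$ output of one operator is an $L^2$ input of the other.

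\textbf{The upper bounds.} Two ingredients are missing.

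First, Theorem~\ref{equ1.20} is a single-symbol statement. For two distinct symbols you need the two-symbol sparse forms, namely Proposition~\ref{prop:two-symbol-sparse} and its diagonal analogue in the proof of Theorem~\ref{thm:diagonal-sufficient-main}. These come from the same recursion, with separate recentering constants $\langle b_1\rangle_{3Q}$ and $\langle b_2\rangle_{3Q}$. Polarizing the one-symbol bound would produce $|b_1^Q\pm b_2^Q|^2$, hence separate oscillation norms of $b_1$ and $b_2$, and this loses exactly the joint structure.

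Second, you apply sparse domination to $C^{1,1}_{\vec b}(T_\Omega^{*2})$ as though $T_\Omega^{*2}$ were covered by the results for $T_\Omega$. Relative to $T_\Omega$, the symbol differences in the expansion above sit between the two input variables, not between output and input. So none of the stated upper bounds applies directly.

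The step you need is the paper's observation that the commutators of $T_\Omega^{*j}$ coincide with those of principal-value operators whose angular kernels $\Omega^{*,j}$ are bounded and have mean zero. For $j=2$, in the variables $(u,v)=(z-y,z-x)$ the kernel becomes $k_\Omega(u-v,-v)$, where $k_\Omega(W)=\Omega(W/|W|)|W|^{-2d}$. Composition with this invertible linear map preserves homogeneity of degree $-2d$ and the mean-zero property, and gives $\|\Omega^{*,2}\|_\infty\lesssim_d\|\Omega\|_\infty$. The change of truncation region alters the principal value at most by a constant multiple of the pointwise product $fg$, which every commutator annihilates. Theorem~\ref{thm:diagonal-sufficient-main} with $(p_1,p_2)=(2,\infty)$, respectively $(\infty,2)$, then gives $m_i=2+\varepsilon$, which is what your H\"older bookkeeping anticipated.

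Alternatively, you could rerun the localized recursion directly on the four-term expansion. This works because that expansion is invariant under adding constants to $b_1,b_2$, and the bound in Proposition~\ref{prop:local-rough-estimate} treats the three slots symmetrically. Either way, one of these arguments has to be supplied.
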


The two-sided estimates above follow from the
more general results stated next.
We begin with a quantitative sparse bound for
iterated commutators, which is also used to
derive the sufficient joint oscillation estimates.
\begin{thm}[Quantitative sparse domination for iterated commutators]
\label{equ1.20}
Let \(k_1,k_2\in\mathbb N_0\), \(k=k_1+k_2\), and let
\(T_\Omega\) be given by \eqref{equ1.1}. If \(k\geq1\), assume that
\[
b\in L^{3k}_{\loc}(\mathbb R^d).
\]
Then, for every \(s>1\) and all
\(f_1,f_2,f_3\in L^\infty_c(\mathbb R^d)\),
\begin{equation}\label{eq:higher-commutator-sparse}
\begin{aligned}
&\bigl|
\langle
C_b^{k_{\{1,2\}}}(T_\Omega)(f_1,f_2),f_3
\rangle
\bigr|\\
&\quad\leq
C_{d,s,k}\|\Omega\|_\infty
\sum_{m_1=0}^{k_1}\sum_{m_2=0}^{k_2}
\sup_{\mathcal S}\sum_{Q\in\mathcal S}|Q|
\prod_{j=1}^3
\left\langle
|b-\langle b\rangle_Q|^{m_j}|f_j|
\right\rangle_{s,Q},
\end{aligned}
\end{equation}
where
\[
m_3=k-m_1-m_2,
\]
and the supremum is taken over all sparse collections with a fixed
dimensional sparsity parameter. The constant satisfies
\begin{equation}\label{eq:quantitative-commutator-constant}
C_{d,s,k}
\lesssim_{d,k}
\frac{s}{s-1},
\qquad s>1.
\end{equation}
\end{thm}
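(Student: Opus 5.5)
The plan is to reduce to Barron's sparse form bound for $T_\Omega$ itself \cite{MR3690231}, tracking the dependence on $s$, and then handle the commutators by the standard algebraic expansion on each sparse cube. Recall that Barron's argument, built on the form-domination principle of Lerner \cite{MR3668591} and Lacey--Mena \cite{MR3873113}, gives
\[
|\langle T_\Omega(f_1,f_2),f_3\rangle|
\le C\,\|\Omega\|_\infty\sup_{\mathcal S}\sum_{Q\in\mathcal S}|Q|\prod_{j=1}^3\langle f_j\rangle_{s,Q}.
\]
The constant $C$ comes from two inputs: the $L^{p}$ boundedness of $T_\Omega$, and an $L^\infty$-type endpoint for the grand maximal truncation. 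The latter is obtained by decomposing $\Omega$ dyadically in scales, $K=\sum_j K_j$, and using the geometric decay $2^{-\delta|j|}$ in $L^2\times L^2\to L^1$ for the smooth pieces, via Grafakos--He--Honz\'ik \cite{MR3758426}, against a trivial $L^s$ bound. Optimizing interpolation between the decay estimate and the trivial estimate yields a factor comparable to $\sum_j \min(1,2^{-\delta j(1-1/s)})\sim (s-1)^{-1}$, hence $s/(s-1)$. I would first re-derive this scalar bound carefully with the constant made explicit; this is the main obstacle. In particular, the "local $L^s$ improving" step for the truncated pieces must have constant linear in $s'=s/(s-1)$, and not worse, since the commutator step only multiplies by $k$-dependent constants.

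Next, I would pass to commutators through the form version on a single stopping cube. By the Lerner/Lacey--Mena recursion, it suffices to prove the following. For a cube $Q_0$ and a truncated operator $T_{Q_0}$ localized to $3Q_0$,
\[
|\langle C_b^{k_{\{1,2\}}}(T_{Q_0})(f_1,f_2),f_3\rangle|
\]
is controlled by $|Q_0|$ times the product of the averages on $Q_0$, plus the contribution from the stopping subcubes. On $Q_0$ I write
\[
b(x)-b(y_i)=(b(x)-\langle b\rangle_{Q_0})-(b(y_i)-\langle b\rangle_{Q_0}).
\]
The binomial expansion of $(b(x)-b(y_1))^{k_1}(b(x)-b(y_2))^{k_2}$ then gives a sum, with $k$-dependent coefficients, of terms
\[
\langle T(\beta^{m_1}f_1,\beta^{m_2}f_2),\beta^{m_3}f_3\rangle,
\qquad \beta=b-\langle b\rangle_{Q_0},
\]
where $m_1\le k_1$, $m_2\le k_2$ and $m_3=k-m_1-m_2$. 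Each term is a non-commutator form of the same truncated operator applied to modified functions. The hypothesis $b\in L^{3k}_{\loc}$ guarantees that $\beta^{m_j}f_j\in L^3_{\loc}$, so all the forms are finite for bounded compactly supported $f_j$.

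The stopping cubes are selected by the condition that one of the averages $\langle \beta_{Q_0}^{m_j}f_j\rangle_{s,\cdot}$ is large, over the finitely many exponent choices. This set of cubes has small total measure, so sparseness holds with a fixed parameter. On each stopping cube $P$ the recursion continues with $\langle b\rangle_{P}$ in place of $\langle b\rangle_{Q_0}$: the commutator is re-expanded there, which is legitimate because the commutator is invariant under adding a constant to $b$. Applying the scalar estimate from the first step to each term yields \eqref{eq:higher-commutator-sparse}. Since the number of terms depends only on $k$, the constant remains $\lesssim_{d,k} s/(s-1)$.
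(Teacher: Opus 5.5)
\textbf{Where your plan matches the paper.} Your commutator step is essentially the paper's. You expand $(b(x)-b(y_1))^{k_1}(b(x)-b(y_2))^{k_2}$ around a local mean $c_Q$ and put every $(b-c_Q)^m f_j$ into the stopping data. You then subtract the children's commutator forms, expanded with the same $c_Q$ (legitimate by constant invariance), so the difference is exactly a sum of localized forms $\Lambda_{\mathcal P(Q)}$. Finally you restart at each child with its own mean. One clarification: the scalar input fed into this recursion must be the localized top-minus-children estimate \eqref{eq:local-rough} in the $\mathcal Y_s$ norms, not a global sparse bound for $T_\Omega$. A global bound applied to each term with $\beta=b-\langle b\rangle_{Q_0}$ would leave the symbol centered at $Q_0$ on every sparse cube.

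\textbf{The gap: the constant $s/(s-1)$.} The genuine gap is the quantitative bound \eqref{eq:quantitative-commutator-constant}, which is the new content of the statement. Your heuristic $\sum_j\min\{1,2^{-\delta j(1-1/s)}\}$ assumes that each Grafakos--He--Honz\'ik piece $T^j$ has a $j$-independent endpoint bound. It does not. For $j>0$ the kernel $K_j$ satisfies only $|\nabla K_j(W)|\lesssim 2^{j}|W|^{-2d-1}$. Its smoothness modulus is therefore $\min\{1,2^{j}|V|/|W|\}$, and the cancellation (Dini) endpoint constant of the localized form grows like $|j|$. Interpolating piece by piece at an exponent $\theta\simeq s-1$ then gives $\sum_j |j|^{1-\theta}2^{-c\theta|j|}\simeq\theta^{-2}$. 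Your route thus yields sparse domination with constant $(s-1)^{-2}$, not $(s-1)^{-1}$.

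\textbf{The missing idea: group the pieces before interpolating.} Use telescoping cutoffs $\psi_t=\Phi_t-\Phi_{t+1}$. Then the block kernel $\widetilde K_n=\sum_{2^n\le|j|<2^{n+1}}K_j=H_{2^{n+1}-1}-H_{2^n-1}+H_{-2^n}-H_{-2^{n+1}}$ keeps a uniform size bound $|W|^{-2d}$ and has modulus $\min\{1,2^{2^{n+1}}|V|/|W|\}$. Its cancellation constant is therefore $\lesssim 2^n$ for the whole block, rather than $\sum_{j\in I_n}|j|\simeq 2^{2n}$. Its $L^2\times L^2\to L^1$ norm is still $\lesssim 2^{-c2^n}$. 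Run the threshold interpolation at $\delta=(s-1)/16$, checking that all truncation constants stay uniform as $\delta\to0$. This gives $2^{n(1-\delta)}2^{-c\delta 2^n}$ per block, and $\sum_n 2^n2^{-c\delta2^n}\lesssim\delta^{-1}$. Without this grouping, or an equivalent device that stops the per-piece growth of the cancellation constant from compounding with the geometric sum, your plan does not reach the claimed $s/(s-1)$ dependence.
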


We use the homogeneous extension of $\Omega$ to
$\mathbb R^{2d}\setminus\{0\}$.
The following theorem gives necessary joint
oscillation conditions for the three commutation
positions.
\begin{thm}[Lower bounds and partial transposition]
\label{thm:joint-necessary-main}
Let $\Omega\in L^\infty(S^{2d-1})$ have mean zero
and satisfy the geometric non-degeneracy condition
of Assumption~\ref{ass:rough-geometric-nondegeneracy}.
Let $b_1,b_2\in L^2_{\mathrm{loc}}(\mathbb R^d)$
be complex-valued.

\smallskip
\noindent
\textup{(i)}
We have
\begin{equation}\label{eq:S22-rough-lower}
S_{2,2}(b_1,b_2)+T_1(b_1,b_2)
\lesssim_\Omega
\|C_{\vec b}^{1,2}(T_\Omega)\|_
 {L^2\times L^2\to L^1}.
\end{equation}

\smallskip
\noindent
\textup{(ii)}
If additionally
$b_1b_2\in L^2_{\mathrm{loc}}(\mathbb R^d)$, then
\begin{equation}\label{eq:rough-lower-11}
\begin{aligned}
S_{2,2}(b_1,b_2)+T_2(b_1,b_2)
&\lesssim_\Omega
\|C_{\vec b}^{1,1}(T_\Omega^{*2})\|_
 {L^2\times L^\infty\to L^2}\\
&=
\|C_{\vec b}^{2,2}(T_\Omega^{*1})\|_
 {L^\infty\times L^2\to L^2}.
\end{aligned}
\end{equation}
Here the partial transposes are taken with respect
to the bilinear pairing $\langle F,h\rangle=\int Fh$.
The norms are understood as the norms of the
associated trilinear forms, or equivalently of
their corresponding bounded realizations.
The implied constants depend only on the dimension,
the kernel, and the non-degeneracy data.
\end{thm}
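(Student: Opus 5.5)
The plan is to test the commutator forms against functions localized near a fixed cube $Q$, with the output localized on a separated cube $P=Q+\ell(Q)v$. Here $v$ is chosen so that the kernel displacements $(x-y,x-z)$, for $x\in P$ and $y,z\in Q$, have midpoint direction in the positive-measure set of directions from Assumption~\ref{ass:rough-geometric-nondegeneracy}. By the density-point argument described in the introduction, after a fixed complex rotation $e^{i\theta}$ we may write $e^{i\theta}K(x-y,x-z)=c_0|Q|^{-2}+E(x,y,z)$ on most of $P\times Q\times Q$. Here $c_0>0$, and the deficit $E$ has small $L^1$ (or $L^2$) average on a large subset. The tests will be built by duality: we pick $f_1,f_2$ supported in $Q$ and $h$ supported in $P$, each carrying the conjugate phases of the centered symbols.

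For (i), the mixed kernel is $(b_1(x)-b_1(y))(b_2(x)-b_2(z))K$. We expand $b_i(x)-b_i(y)=(b_i(x)-\langle b_i\rangle_Q)-(b_i(y)-\langle b_i\rangle_Q)$. The term involving $(b_1(y)-\langle b_1\rangle_Q)(b_2(z)-\langle b_2\rangle_Q)$ is isolated by taking $f_1=\overline{\operatorname{sgn}}(b_1-\langle b_1\rangle_Q)|b_1-\langle b_1\rangle_Q|1_Q$, normalized in $L^2$, similarly $f_2$, and $h=1_P$ in $L^\infty$. The main term is then $c_0\langle|b_1-\langle b_1\rangle_Q|\rangle_{2,Q}\langle|b_2-\langle b_2\rangle_Q|\rangle_{2,Q}$ up to normalization. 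The terms with $x$-dependence are removed by a standard median/sign-splitting trick: restrict $h$ to subsets of $P$ where $b_i-\langle b_i\rangle_P$ is small, or rather combine tests over $P$ and $Q$ symmetrically, so that the cross terms involve oscillation on $P$ and are absorbed after taking suprema over cubes comparable in size. The error $E$ is controlled by Cauchy--Schwarz against the $L^2$ averages, and it is small by the uniform deficit estimate, so it can be absorbed. To obtain $T_1$, we instead take $f_1=1_Q$, $f_2=1_Q$ and let $h$ carry the phase of $(b_1(x)-\langle b_1\rangle_{Q'})(b_2(x)-\langle b_2\rangle_{Q'})$ on the output cube $Q'=P$. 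Because $|\int_Q K\,dy\,dz|\gtrsim 1$ after rotation, the output-side product term equals $\langle|b_1-c_1||b_2-c_2|\rangle_{1,P}$. The mixed terms are bounded by $S_{2,2}$, which was already controlled, and constants are replaced by averages at the cost of further $S_{2,2}$ terms.

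For (ii), we first verify the identity of the two norms by partial transposition: $C^{1,1}_{\vec b}(T^{*2})$ and $C^{2,2}_{\vec b}(T^{*1})$ correspond to the same trilinear form with the roles of variables permuted, so that the doubled commutation sits on a single pair of variables. For the form with kernel $(b_1(x)-b_1(y))(b_2(x)-b_2(y))K$, the first input $f$ on $Q$ is in $L^2$, the second is $1_P\in L^\infty$, and the dual function $h$ on $Q$ is in $L^2$. We then test with $f=1_Q$ and $h$ equal to the phase of $(b_1-\langle b_1\rangle_Q)(b_2-\langle b_2\rangle_Q)$ times its modulus, normalized in $L^2$, placing the repeated variable in $Q$. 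This yields $T_2$ through the term $(b_1(y)-c_1)(b_2(y)-c_2)$, with cross terms controlled by $S_{2,2}$. The bound on $S_{2,2}$ comes from splitting the product differently, with $f$ and $h$ each carrying one centered symbol. The hypothesis $b_1b_2\in L^2_{\loc}$ guarantees that these tests are admissible.

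The main obstacle is that complex cross terms of indefinite sign must be absorbed without losing the main term. The quantities $S$ and $T$ are suprema, so the plan is to prove an inequality of the form $X_Q\le C\|C\|+\delta\sup X$ for a small $\delta$ coming from the kernel deficit. One then concludes after first working with truncated symbols, so that all suprema are finite, and removing the truncation by monotone limits at the end.
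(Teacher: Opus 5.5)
\textbf{There is a genuine gap in the proposal.} It rests on assuming that after rotation $e^{i\theta}K=c_0|Q|^{-2}+E$ with $E$ small in an $L^1$ or $L^2$ average, i.e.\ that $K$ is two-sidedly close to a positive constant on $P\times Q\times Q$. Assumption~\ref{ass:rough-geometric-nondegeneracy} does not give this.

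It only bounds $\operatorname{Re}(\lambda\Omega)$ from below. The density-point argument (Lemma~\ref{lem:rough-integral-smallness}) therefore only makes the one-sided deficit $(k_A-\operatorname{Re}(\lambda K_\Omega))_+$ small. The imaginary part of $\lambda K_\Omega$ and the excess $(\operatorname{Re}(\lambda K_\Omega)-k_A)_+$ keep full size $\|\Omega\|_\infty(Ar)^{-2d}$. For example, $\Omega(\theta+h,\theta-h)=1+(1+i)\mathbf 1_G(h/|h|)$ on $F\times B(0,\delta)$, with $G$ a hemisphere, satisfies the assumption. Yet it is not $L^1$-close to any constant near the diagonal directions, and in the $y$-variable it jumps across a hyperplane through $z$.

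A one-sided bound controls $\operatorname{Re}\iiint\lambda K\,W$ only when $W\ge0$. Your single test $(\overline{a_1}/\sigma_1,\overline{a_2}/\sigma_2,\mathbf 1_P)$ produces the weight $(a_1(x)-a_1(y))(a_2(x)-a_2(z))\overline{a_1(y)a_2(z)}/(\sigma_1\sigma_2)$. Its three cross terms vanish for a constant kernel only through $\int_Qa_i=0$, and a rough $K$ destroys that cancellation. Restricting $h$ to a median set does not help. It only makes $a_i(x)$ roughly the non-small constant $\langle b_i\rangle_P-\langle b_i\rangle_Q$, and it still leaves terms like $\iiint K\,\overline{a_1(y)}|a_2(z)|^2$ that need cancellation in $y$. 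These terms are comparable to the main term, with coefficients of order $\|\Omega\|_\infty/\kappa$ rather than $\varepsilon_A$. So no small $\delta$ appears, and $X_Q\le C\|C\|+\delta\sup X$ cannot be closed. Separately, truncating the symbols to make $\sup X$ finite does not bound the truncated commutator norm by the original one.

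\textbf{The same problem makes (ii) circular.} Your $S_{2,2}$ test ($f=\overline{a_1}$, $h=\overline{a_2}$) has cross terms of size $C\langle|a_1a_2|\rangle_{2,Q}$. Your $T_2$ test ($f=\mathbf 1_Q$, $h=\overline{a_1a_2}$) has cross terms of size $C\sigma_1\sigma_2$. Since $C$ is not small, the two inequalities do not close.

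\textbf{The missing idea} is to sum four tests so that the total weight against the kernel is nonnegative; then the one-sided deficit suffices.
\begin{itemize}
\item In (i), take inputs $\mathbf 1_Q$ and $\overline{a_i}/\sigma_i$. Take output tests on $P$ equal to $\overline{a_1a_2}$, $-\sigma_1\overline{a_2}$, $-\sigma_2\overline{a_1}$, $\sigma_1\sigma_2$, each divided by $D_1D_2$, where $D_i=(|a_i|^2+\sigma_i^2)^{1/2}$. These have sup norm at most $1$. The sum equals $\iiint K\,|a_1(x)-a_1(y)|^2|a_2(x)-a_2(z)|^2/(D_1D_2)$. Its deficit error is at most $C\varepsilon_A$ times the main term $k_A|Q|^2\int_PD_1D_2$ on the same cube, so no supremum is needed.
\item In (ii), set $\Pi(y,z)=(a_1(y)-a_1(z))(a_2(y)-a_2(z))$ and choose four pairs with $\sum_jf_j(y)h_j(z)=\overline{\Pi(y,z)}$. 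This gives $\iint_{Q\times Q}H_Q|\Pi|^2$ with $\operatorname{Re}(\lambda H_Q)\gtrsim|Q|^{-1}$ and $\sum_j\|f_j\|_2\|h_j\|_2\le2\|\Pi\|_{L^2}$. The variance identity then shows $\|\Pi\|_{L^2(Q\times Q)}/|Q|$ controls $T_2$ and $S_{2,2}$ together.
\end{itemize}
Finally, your separate $T_1$ test in (i) is unnecessary, since $T_1\le S_{2,2}$ by Cauchy--Schwarz.
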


The repeated-position terms are two dual
representations of the same trilinear form.
The geometric assumption and its linear
specialization are discussed in
Section~\ref{sec:geometric-nondegeneracy}.
All three lower estimates use the original kernel
with its output separated from its two inputs.

The sufficient estimates hold for every bounded
mean-zero angular kernel and follow from the
two-symbol sparse bounds.

\begin{thm}[Upper bound for mixed commutators]
\label{thmm4.5}
Let \(1<p_1,p_2\leq\infty\) and
$
\frac1p=\frac1{p_1}+\frac1{p_2}>0.
$
Assume that
$
b_1,b_2\in L^3_{\loc}(\mathbb R^d).
$
For every \(\varepsilon>0\),
\[
\begin{aligned}
\|C_{\vec b}^{1,2}(T_\Omega)\|_
 {L^{p_1}\times L^{p_2}\to\mathcal X^p}
&\lesssim_{d,p_1,p_2,\varepsilon}
\|\Omega\|_\infty
\Bigl(
T_{\max\{p+\varepsilon,1+\varepsilon\}}(b_1,b_2)\\
&\qquad+
S_{\max\{p_1'+\varepsilon,p+\varepsilon\},
        \max\{p_2'+\varepsilon,p+\varepsilon\}}(b_1,b_2)
\Bigr),
\end{aligned}
\]
where
\[
\mathcal X^p=
\begin{cases}
L^p, & 1\leq p<\infty,\\
L^{p,\infty}, & 1/2<p<1.
\end{cases}
\]
\end{thm}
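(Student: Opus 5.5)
The plan is to derive Theorem~\ref{thmm4.5} from a two-symbol version of the sparse bound in Theorem~\ref{equ1.20}. Applying the proof of that theorem with $b_1$ in the first position and $b_2$ in the second (polarizing, or running the same argument with two symbols), we expect
\[
|\langle C_{\vec b}^{1,2}(T_\Omega)(f_1,f_2),f_3\rangle|
\lesssim_d \frac{s}{s-1}\|\Omega\|_\infty
\sum_{\sigma}\sup_{\mathcal S}\sum_{Q\in\mathcal S}|Q|
\prod_{j=1}^3\bigl\langle \beta^\sigma_{j,Q} |f_j|\bigr\rangle_{s,Q}.
\]
Here $\sigma$ runs over the four ways of distributing the factors $b_1-\langle b_1\rangle_Q$ and $b_2-\langle b_2\rangle_Q$: the factor $b_1$ goes to $f_1$ or $f_3$, and $b_2$ goes to $f_2$ or $f_3$. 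These come from expanding $b_i(x)-b_i(y)=(b_i(x)-\langle b_i\rangle_Q)-(b_i(y)-\langle b_i\rangle_Q)$ in each localized piece. The four sparse forms are
\begin{itemize}
\item[(a)] $\langle |b_1-\langle b_1\rangle_Q| f_1\rangle_s\,\langle |b_2-\langle b_2\rangle_Q| f_2\rangle_s\,\langle f_3\rangle_s$;
\item[(b)] $\langle |b_1-\langle b_1\rangle_Q| f_1\rangle_s\,\langle f_2\rangle_s\,\langle |b_2-\langle b_2\rangle_Q| f_3\rangle_s$, and its symmetric counterpart;
\item[(c)] $\langle f_1\rangle_s\langle f_2\rangle_s\,\langle |b_1-\langle b_1\rangle_Q||b_2-\langle b_2\rangle_Q| f_3\rangle_s$.
\end{itemize}
Fix $s$ close to $1$, depending on $\varepsilon$, so that $s/(s-1)\lesssim_\varepsilon 1$.

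\textbf{Banach range $1\le p<\infty$.} We dualize with $f_3\in L^{p'}$. In each term, Hölder's inequality inside the averages separates the oscillation from the function. For example, $\langle |b_1-\langle b_1\rangle_Q| f_1\rangle_{s,Q}\le \langle |b_1-\langle b_1\rangle_Q|\rangle_{r_1,Q}\langle f_1\rangle_{q_1,Q}$ with $1/s=1/r_1+1/q_1$ and $q_1<p_1$. In term (c) the product $|b_1-\langle b_1\rangle_Q||b_2-\langle b_2\rangle_Q|$ is extracted in $L^u$, with $q_3<p'$, which forces $u$ slightly above $p$ (or above $1$ when $p'=\infty$). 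The term then reads $T_u$ times $\sum_Q|Q|\langle f_1\rangle_{q_1}\langle f_2\rangle_{q_2}\langle f_3\rangle_{q_3}$. This sum is bounded by $\prod\|f_j\|_{p_j}$ by the standard $L^p$ boundedness of sparse forms with exponents $q_j<p_j$, where $p_3=p'$. Term (a) gives $S_{r_1,r_2}$ with $r_i$ slightly above $p_i'$, since only $f_3$ must remain unweighted. Terms (b) give products of the two oscillations with $r_1$ near $p_1'$ and $r_2$ near $p$. This last point needs care. For $b_2$ attached to $f_3\in L^{p'}$, the oscillation exponent is determined by $1/s=1/r+1/q_3$ with $q_3<p'$, so $r$ is slightly above $p$. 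All exponents that appear are dominated by $\max\{p_i'+\varepsilon,p+\varepsilon\}$, and monotonicity of $S$ and $T$ in their exponents gives the stated bound. When some $p_i=\infty$ we have $p_i'=1$ and the same choices work.

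\textbf{Quasi-Banach range $1/2<p<1$.} Duality fails here. We instead use the sparse form for weak-type estimates in the standard way. Fix a set $E$ and build a major subset $E'\subset E$, obtained by removing where $M_{q_1}f_1\cdot M_{q_2}f_2$ is large relative to $|E|^{-1/p}$. We then test with $f_3=\chi_{E'}$, for which $\langle f_3\rangle_{q_3,Q}$ is controlled using $p_3=\infty$ effectively. The oscillation attached to $f_3$ is extracted with exponent slightly above $1$ in term (c), and in terms (b) with exponent $\max\{p+\varepsilon, p_i'+\varepsilon\}$, which is at least $1$ anyway because $p_i'>1$. The resulting form is the restricted weak-type estimate. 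Marcinkiewicz-type interpolation is not needed, because the bound is directly of weak type: $\lambda|\{|C|>\lambda\}|^{1/p}\lesssim\prod\|f_j\|_{p_j}$ follows from testing against $E'$.

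\textbf{Main obstacle.} The hardest step is the two-symbol sparse bound itself, in particular handling the truncation of the rough kernel with $b$-localized pieces. We take this from the proof of Theorem~\ref{equ1.20} with $k_1=k_2=1$, noting that the argument is linear in each symbol slot, so distinct symbols pass through unchanged. The second delicate point is the bookkeeping of Hölder exponents near the endpoints, so that every term stays within the stated $S$ and $T$ indices uniformly as $s\downarrow1$.
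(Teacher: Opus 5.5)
Your proposal is correct and follows essentially the paper's route. The ingredients are the same: the four-term two-symbol sparse bound of Proposition~\ref{prop:two-symbol-sparse}, obtained by rerunning the localized recursion with two symbols; H\"older's inequality inside each cube with the same exponent bookkeeping; the sparse embedding plus duality for $p\ge1$; and major-subset testing via Lemma~\ref{lemm4.4} for $1/2<p<1$. Polarization alone would not give the first ingredient, since it only recovers the symmetrization $[b_1,[b_2,T]_2]_1+[b_2,[b_1,T]_2]_1$.

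Two steps are left implicit. The first is the summation in the quasi-Banach case. This is not an $L^\infty$ embedding for $f_3$, which fails since $1/p_1+1/p_2>1$. Instead, set $\mathcal M=M_{r_1,r_2}(f_1,f_2)$. Every cube meeting the major subset contains points where $\mathcal M$ is below the threshold $\lambda_0\simeq|G|^{-1/p}\|\mathcal M\|_{L^{p,\infty}}$, so the paper's packing argument bounds the sum by $\int_{E^c}\mathcal M\lesssim|G|^{1-1/p}\|\mathcal M\|_{L^{p,\infty}}$, and this is where $p<1$ is used. The second is the scalar-pairing extension to general $L^\infty$ inputs when some $p_i=\infty$.
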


For two commutations in the same input position,
the sufficient condition takes the following form.

\begin{thm}[Upper bounds for diagonal commutators]
\label{thm:diagonal-sufficient-main}
Under the assumptions of Theorem~\ref{thmm4.5},
let $i\in\{1,2\}$.
For every \(\varepsilon>0\), set
\[
m_i=\max\{p+\varepsilon,p_i'+\varepsilon\}.
\]
Then
\begin{equation}\label{eq:diagonal-sufficient}
\begin{aligned}
\|C_{\vec b}^{i,i}(T_\Omega)\|_
 {L^{p_1}\times L^{p_2}\to\mathcal X^p}
\lesssim_{d,p_1,p_2,\varepsilon}
\|\Omega\|_\infty
\bigl(
T_{m_i}(b_1,b_2)+S_{m_i,m_i}(b_1,b_2)
\bigr),
\quad i=1,2.
\end{aligned}
\end{equation}
\end{thm}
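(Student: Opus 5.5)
We will derive the estimate from a two-symbol version of the sparse bound in Theorem~\ref{equ1.20}. By symmetry we treat $i=1$. Expanding the commutator at the level of kernels and centering at a cube $Q$, write $b_j(x)-b_j(y)=(b_j(x)-\langle b_j\rangle_Q)-(b_j(y)-\langle b_j\rangle_Q)$ for $j=1,2$. The kernel of $C_{\vec b}^{1,1}(T_\Omega)$ then splits into four terms. The centered factors fall either both on the output $f_3$, or one on $f_3$ and one on $f_1$, or both on $f_1$. The proof of Theorem~\ref{equ1.20}, rerun with two symbols and the same sparse selection, should give for every $s>1$
\[
|\langle C_{\vec b}^{1,1}(T_\Omega)(f_1,f_2),f_3\rangle|
\lesssim_{d,s}\|\Omega\|_\infty\sup_{\mathcal S}\sum_{Q\in\mathcal S}|Q|
\sum_{(A,B)}\langle A f_1\rangle_{s,Q}\langle f_2\rangle_{s,Q}\langle B f_3\rangle_{s,Q}.
\]
Here $(A,B)$ ranges over $(1,\beta_1\beta_2)$, $(\beta_1,\beta_2)$, $(\beta_2,\beta_1)$ and $(\beta_1\beta_2,1)$, with $\beta_j=|b_j-\langle b_j\rangle_Q|$. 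The only change from the one-symbol argument is bookkeeping: each power $|b-\langle b\rangle_Q|^{m_j}$ is replaced by the corresponding product of distinct $\beta_j$'s. The local integrability used there, $L^{3k}_{\loc}$ with $k=2$, is replaced by the $L^3_{\loc}$ hypothesis on $b_1,b_2$ and Hölder's inequality.

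Next we absorb the symbols by Hölder on each cube. Fix $\varepsilon>0$. Choose $s>1$ close to $1$ and exponents $q_1>p_1$, $q_3>p'$ close to $p_1$ and $p'$ (for $p<1$ use instead the weak-type dual exponent), so that
\[
\langle \beta_1\beta_2 f_1\rangle_{s,Q}\le\langle\beta_1\beta_2\rangle_{m,Q}\langle f_1\rangle_{q_1,Q}
\quad\text{and}\quad
\langle \beta_1 f_1\rangle_{s,Q}\langle\beta_2 f_3\rangle_{s,Q}\le\langle\beta_1\rangle_{m,Q}\langle\beta_2\rangle_{m,Q}\langle f_1\rangle_{q_1,Q}\langle f_3\rangle_{q_3,Q},
\]
where $1/s=1/m+1/q_1$, and similarly with $q_3$ on the output side. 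The exponent needed on the $f_1$ side is essentially $p_1'$ and on the output side essentially $p$. Taking the larger one, $m=m_1=\max\{p+\varepsilon,p_1'+\varepsilon\}$, covers all four terms, since $T_u$ and $S_{r,t}$ are nondecreasing in their exponents. Taking suprema over $Q$ bounds the sparse form by
\[
\bigl(T_{m_1}+S_{m_1,m_1}\bigr)\sup_{\mathcal S}\sum_{Q\in\mathcal S}|Q|\langle f_1\rangle_{q_1,Q}\langle f_2\rangle_{s,Q}\langle f_3\rangle_{q_3,Q}.
\]
The cross terms give $S_{m_1,m_1}$ and the pure terms give $T_{m_1}$.

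It remains to bound this symbol-free sparse form. Since $q_1<p_1$, $s<p_2$ and $q_3<p'$ when $p\ge1$, the standard sparse-form estimate gives $\lesssim\|f_1\|_{p_1}\|f_2\|_{p_2}\|f_3\|_{p'}$, hence the $L^p$ bound for $1\le p<\infty$. For $1/2<p<1$, the weak bound follows from the usual sparse argument: test against $f_3=\mathbf 1_{E'}$ with a major subset $E'$ of a set $E$, choose the output average exponent $q_3$ close to $1$, and apply the Lerner–Ombrosi type weak-type sparse estimate. This is exactly the argument used to prove Theorem~\ref{thmm4.5}, which we invoke with the symbol-dependent factors already removed.

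The main obstacle is the exponent bookkeeping in the quasi-Banach range. There the output is paired with indicator functions rather than $L^{p'}$ functions, so the exponent placed on the output side must be $p+\varepsilon$ rather than a dual exponent, and the cost of the restricted weak-type argument has to be absorbed. We will mirror the treatment of the output term in Theorem~\ref{thmm4.5}. The choice $m_i=\max\{p+\varepsilon,p_i'+\varepsilon\}$ is designed to dominate both placements at once.
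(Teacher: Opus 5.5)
Your proposal follows the paper's proof essentially step by step: the four-term recentered identity, the two-symbol stopping recursion giving the four sparse products, cubewise H\"older with the single exponent $m_1$, the sparse embedding for $p\ge1$, and the major-subset packing argument of Theorem~\ref{thmm4.5} applied to $M_{r,s}(f_1,f_2)$ for $1/2<p<1$. Your exponent bookkeeping for $p<1$ is inverted, though. The requirement $m_1\ge p+\varepsilon$ comes from the Banach-range output condition $r<p'$; for $p<1$ one instead uses $|h|\le\mathbf 1_{G'}$ to get $\langle\beta_1 h\rangle_{s,Q}\le\langle\beta_1\rangle_{m_1,Q}$ and then drops the indicator, since a split $1/s=1/m+1/q_3$ with $q_3$ near $1$ would force $m$ to be large. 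You should also add the scalar-pairing extension from the end of the proof of Theorem~\ref{thmm4.5} for the case where $p_1$ or $p_2$ equals $\infty$.
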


For the mixed estimate in
Corollary~\ref{cor:two-sided-main}, apply
Theorem~\ref{thmm4.5} with $p_1=p_2=2$ and combine
it with Theorem~\ref{thm:joint-necessary-main}\textup{(i)}.
For the repeated-position estimates, the commutators of the
partial transposes agree with those of principal-value operators
having mean-zero angular kernels $\Omega^{\ast,j}$ satisfying
\[
\|\Omega^{\ast,j}\|_\infty\lesssim_d\|\Omega\|_\infty,
\qquad j=1,2.
\]
Hence Theorem~\ref{thm:diagonal-sufficient-main}
applies to $T_\Omega^{\ast,2}$ with
$(p_1,p_2)=(2,\infty)$ and to $T_\Omega^{\ast,1}$
with $(p_1,p_2)=(\infty,2)$.
In both cases the relevant oscillation exponent is
$2+\varepsilon$.
Together with
Theorem~\ref{thm:joint-necessary-main}\textup{(ii)},
this proves \eqref{eq:diagonal-joint-comparison}.

Only the lower bounds require geometric
non-degeneracy; the upper bounds hold for every
bounded mean-zero angular kernel.

Since $T_1\leq S_{2,2}$, the necessary condition
for the mixed commutator reduces to the finiteness
of $S_{2,2}$.
For repeated commutations, the lower estimates
also detect the stronger quantity $T_2$.
Removing the arbitrarily small increase in the
oscillation exponents remains an open problem.

The paper is organized as follows.
Section~\ref{sec2} introduces the notation,
the truncation framework, and the geometric
non-degeneracy condition.
Section~\ref{sec3.2} proves the quantitative
sparse estimates for iterated commutators.
Section~\ref{sec4} establishes the joint oscillation
lower bounds and the sufficient conditions,
including the weak estimates in the quasi-Banach range.

\subsection*{Acknowledgements}

The author is deeply grateful to Professor Kangwei Li for suggesting
the problem and for his valuable guidance and discussions. The author
also thanks his mentors and colleagues for their helpful comments and
support.

\subsection*{AI use Statement.}
All mathematical content and proofs in this paper are solely due to the author. 
Large language models assisted with language editing and
proof checking during manuscript preparation.

\section{Preliminaries}\label{sec2}

We fix the notation and sparse framework,
justify passage from smooth truncations to
the limiting forms, and introduce the geometric
non-degeneracy assumption used in Section~\ref{sec4.1}.

\subsection{Notation and sparse families}
\label{sec:notation-sparse}

Throughout the paper, all cubes in \(\mathbb R^d\) have sides parallel
to the coordinate axes. For a cube \(Q\), let \(\ell(Q)\) denote its
side length, and let \(\lambda Q\) be the cube with the same center and
side length \(\lambda\ell(Q)\). We write
\[
\langle f\rangle_Q:=\frac1{|Q|}\int_Q f
\]
and, for \(1\leq r<\infty\),
\[
\langle |f|\rangle_{r,Q}
:=
\left(\frac1{|Q|}\int_Q|f|^r\right)^{1/r},
\qquad
\langle |f|\rangle_{\infty,Q}
:=
\operatorname*{ess\,sup}_{Q}|f|.
\]
For \(1\leq r\leq\infty\), \(r'\) denotes the conjugate exponent. Our
pairing convention is
\[
\langle F,h\rangle:=\int_{\mathbb R^d}F(x)h(x)\,dx,
\]
without complex conjugation. 

For \(0<p<\infty\), the weak \(L^p\) quasi-norm is
\[
\|F\|_{L^{p,\infty}}
:=
\sup_{\lambda>0}
\lambda
\bigl|\{x\in\mathbb R^d:|F(x)|>\lambda\}\bigr|^{1/p}.
\]
We write \(A\lesssim_{\alpha}B\) if
\(A\leq C_\alpha B\), where the constant depends only on the displayed
parameters, and write \(A\simeq_\alpha B\) when both inequalities hold.

In the lower estimates, commutator norms denote
the least constants in the corresponding trilinear
form bounds on bounded compactly supported tests
whose symbol-weighted factors are bounded;
the norm is infinite if no finite bound exists.
The lower estimates therefore apply to every
bounded realization agreeing with these forms.
Norm identities under transposition are understood
in this sense.

For comparison with the upper estimates, we use
the realizations constructed in Section~\ref{sec4}.
Bounds involving an $L^\infty$ input are first
proved for bounded compactly supported inputs,
uniformly in their supports.
The extension to general $L^\infty$ inputs,
weak-$\ast$ continuous at the level of scalar
pairings, is justified at the end of the proof
of Theorem~\ref{thmm4.5}.

A dyadic lattice \(\mathcal D\) is a collection of cubes such that,
for every \(k\in\mathbb Z\), the cubes in \(\mathcal D\) of side length
\(2^{-k}\) form a partition of \(\mathbb R^d\), and any two cubes in
\(\mathcal D\) are either disjoint or one contains the other. We use
the standard adjacent-lattice fact that every cube \(Q\) is contained
in a cube \(R\) belonging to one of finitely many dyadic lattices and
satisfying
\[
\ell(R)\lesssim_d\ell(Q);
\]
see, for example, \cite{MR4007575}.

A collection \(\mathcal S\) of cubes is called \(\eta\)-sparse,
\(0<\eta<1\), if for every \(Q\in\mathcal S\) there is a measurable set
\(E_Q\subset Q\) such that
\[
|E_Q|\geq\eta|Q|
\]
and the sets \(\{E_Q:Q\in\mathcal S\}\) are pairwise disjoint. When the
value of \(\eta\) is irrelevant, we simply say that \(\mathcal S\) is
sparse. 
%For \(Q\in\mathcal S\), we write \(\operatorname{ch}_{\mathcal S}(Q)\) for the maximal cubes in \(\mathcal S\) that are properly contained in \(Q\).

For \(1\leq r<\infty\), define
\[
M_r f(x)
:=
\sup_{Q\ni x}\langle |f|\rangle_{r,Q}.
\]
We write \(M=M_1\). We use the standard boundedness of \(M_r\) on
\(L^p(\mathbb R^d)\) whenever \(1\leq r<p\leq\infty\).

\subsection{Smooth truncations and limiting forms}
\label{sec:truncations-limits}

Let $\chi\in C^\infty([0,\infty))$ be nonincreasing,
with $0\leq\chi\leq1$, and satisfy
\[
\chi(t)=1\quad\text{for }0\le t\le1/4,
\qquad
\chi(t)=0\quad\text{for }t\ge1/2.
\]
For integers $\mu<\nu$, define
\[
\begin{aligned}
T_{\Omega,\mu}^{\nu}(f_1,f_2)(x)
:={}&
\iint_{\mathbb R^d\times\mathbb R^d}
\bigl[
\chi(2^{-\nu}|(y_1,y_2)|)
-\chi(2^{-\mu}|(y_1,y_2)|)
\bigr]\\
&\quad\times
\frac{\Omega((y_1,y_2)/|(y_1,y_2)|)}
{|(y_1,y_2)|^{2d}}
f_1(x-y_1)f_2(x-y_2)\,dy_1\,dy_2,
\end{aligned}
\]
and set
\[
\Lambda_\mu^\nu(f_1,f_2,f_3)
=
\langle T_{\Omega,\mu}^{\nu}(f_1,f_2),f_3\rangle.
\]
When $\nu\le\mu$, we define both
$T_{\Omega,\mu}^{\nu}$ and $\Lambda_\mu^\nu$ to be zero.

These truncations are finite sums of annular pieces
obtained using smooth radial cutoffs,
since
\[
\chi(2^{-\nu}r)-\chi(2^{-\mu}r)
=
\sum_{\mu<a\le\nu}
\bigl[\chi(2^{-a}r)-\chi(2^{1-a}r)\bigr].
\]
On the support of the piece indexed by $a$, all pairwise
distances between the three spatial variables are at most
$2^a$. All finite truncations below use this fixed scale
decomposition.
The following lemma defines the limiting commutator
forms and justifies working with finitely many
scales in the sparse arguments.

\begin{lem}[Passage to the untruncated form]
\label{lem:rough-form-limit}
Let $\Omega\in L^\infty(S^{2d-1})$ have mean zero,
and let $1<q_1,q_2,q_3<\infty$ satisfy
$
\frac1{q_1}+\frac1{q_2}+\frac1{q_3}=1.
$
For every \(h_j\in L^{q_j}(\mathbb R^d)\), \(j=1,2,3\), the limit
\begin{equation}\label{eq:untruncated-form-definition}
\Lambda_\Omega(h_1,h_2,h_3)
:=
\lim_{\substack{\mu\to-\infty\\ \nu\to+\infty}}
\Lambda_\mu^\nu(h_1,h_2,h_3)
\end{equation}
exists and satisfies
\begin{equation}\label{eq:rough-form-bound}
\sup_{\mu<\nu}
|\Lambda_\mu^\nu(h_1,h_2,h_3)|
+
|\Lambda_\Omega(h_1,h_2,h_3)|
\lesssim_{d,q_1,q_2,q_3}
\|\Omega\|_\infty
\prod_{j=1}^3\|h_j\|_{L^{q_j}}.
\end{equation}
For smooth compactly supported inputs, \(\Lambda_\Omega\) agrees with
the principal-value form associated with \(T_\Omega\).

If $k=k_1+k_2\ge1$ and $b\in L^{3k}_{\loc}$,
the finite algebraic expansion of
$C_b^{k_{\{1,2\}}}(T_{\Omega,\mu}^{\nu})$
therefore converges termwise on bounded compactly
supported test functions and defines the corresponding
untruncated commutator form.
The same conclusion holds for
$[b_1,[b_2,T_\Omega]_j]_i$, $i,j\in\{1,2\}$,
when $b_1,b_2\in L^3_{\loc}$.
For bounded compactly supported inputs, these
two-symbol commutator forms are represented by
locally integrable functions.
\end{lem}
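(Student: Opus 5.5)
The plan is to prove the bound \eqref{eq:rough-form-bound} and the existence of the limit \eqref{eq:untruncated-form-definition} together by a Cauchy argument in the scale parameters. Then I transfer convergence to the commutator expansions through H\"older's inequality.

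\textbf{Step 1: bounds for single annular pieces.} Write
\[
\Lambda_\mu^\nu=\sum_{\mu<a\le\nu}\Lambda_a,
\]
where $\Lambda_a$ is the form with kernel $[\chi(2^{-a}|\cdot|)-\chi(2^{1-a}|\cdot|)]\Omega(\cdot')|\cdot|^{-2d}$. Each piece is trivially bounded, $|\Lambda_a(h_1,h_2,h_3)|\lesssim\|\Omega\|_\infty\int M h_1\,Mh_2\,|h_3|$. This crude bound is not summable in $a$, so it only serves for individual pieces.

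\textbf{Step 2: the uniform bound, which is the main obstacle.} The key input is the Grafakos--He--Honz\'ik theory \cite{MR3758426}, with the refinements of \cite{HePark2023}. Their Fourier-side decomposition of the kernel $K=\sum_a K_a$ gives
\[
\Bigl\|\sum_{a\in I}T_a\Bigr\|_{L^{q_1}\times L^{q_2}\to L^{q_3'}}\lesssim\|\Omega\|_\infty
\]
uniformly over intervals $I\subset\mathbb Z$. The reason is that their argument treats each frequency-localized piece $\sum_a K_a*\widehat{\psi_{j+a}}$ with a decay $2^{-\delta|j|}$ that does not depend on which dyadic scales $a$ are included. Smooth truncations in $a$ only multiply the symbol by $\chi$-type radial cutoffs, which the same Littlewood--Paley argument absorbs. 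The mean-zero condition is used, for the pieces with $j<0$, through the cancellation $\widehat{K_a}(0)=0$.

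I expect this step to be the hardest, because it requires reopening that proof. The first thing I would try is to quote \cite{MR3758426} for the maximal truncation, or \cite{MR4358246}, which treats maximal truncations directly. Those results dominate $\sup_{\mu<\nu}|T^\nu_{\Omega,\mu}|$ in $L^{q_3'}$, and that domination gives the supremum part of \eqref{eq:rough-form-bound} at once.

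\textbf{Step 3: existence of the limit.} By the uniform bound, it suffices to prove convergence on a dense class, namely $h_j\in C_c^\infty$.

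For small scales, with $a\to-\infty$, Taylor expansion is available. Write $h_1(x-y_1)h_2(x-y_2)=h_1(x)h_2(x)+O(|y|)$. The constant term is annihilated by the mean-zero property of $\Omega$ over spheres, since the radial cutoff factors out. The remainder gives $|\Lambda_a|\lesssim 2^{a}\|\Omega\|_\infty C(h)$, which is summable.

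For large scales, with $a\to+\infty$, compact support of the inputs shows that $|\Lambda_a|\lesssim\|\Omega\|_\infty 2^{-2da}\|h_1\|_1\|h_2\|_1\|h_3\|_1$, which is also summable.

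Hence $\Lambda_\mu^\nu$ is Cauchy on the dense class, and density together with the uniform bound yields the limit for all $h_j\in L^{q_j}$, with the same bound. The same Taylor computation identifies the limit with the principal-value form for smooth inputs.

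\textbf{Step 4: commutators.} Expand $C_b^{k_{\{1,2\}}}(T^\nu_{\Omega,\mu})$ as a finite sum of forms
\[
\Lambda_\mu^\nu\bigl(b^{\alpha_1}f_1,\,b^{\alpha_2}f_2,\,b^{\alpha_3}f_3\bigr),\qquad \alpha_1+\alpha_2+\alpha_3=k.
\]
For $f_j\in L^\infty_c$ and $b\in L^{3k}_{\loc}$, each $b^{\alpha_j}f_j$ lies in $L^{3k/\alpha_j}\subset L^{3}$ on compact sets. Taking $q_j=3$ and applying Step 3 gives termwise convergence.

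For two symbols $b_1,b_2\in L^3_{\loc}$, the factors have the form $b_1^{\epsilon_1}b_2^{\epsilon_2}f_j$ with $\epsilon_1,\epsilon_2\in\{0,1\}$. When a product $b_1b_2$ falls on a single input, Step 3 does not apply directly, since that factor need not lie in $L^3$. Instead I would use the alternative exponents $q=(3/2,3,3)$ with $b_1b_2f\in L^{3/2}$ placed against two $L^\infty_c$ functions. Any permutation of admissible exponents is allowed because $q_j>1$.

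\textbf{Step 5: representation by functions.} The limiting form is bounded in $h_3\in L^{q_3}(K)$ for each compact $K$. By duality it is therefore given by an $L^{q_3'}_{\loc}$ function, which is locally integrable. Consistency across different compact sets $K$ follows from uniqueness of the dual representative.
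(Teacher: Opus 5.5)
Your architecture is the paper's: a bound for the truncated forms that is uniform in $\mu<\nu$, explicit convergence for $C_c^\infty$ inputs through the mean-zero cancellation, density, and H\"older bookkeeping for the expanded commutator terms. Your scale-by-scale summation, with $|\Lambda_a|\lesssim2^{a}$ as $a\to-\infty$ and $|\Lambda_a|\lesssim2^{-2da}$ as $a\to+\infty$, is the same computation as the paper's subtraction of $\Phi(0)\mathbf 1_{\{|y|\le1\}}$ followed by dominated convergence.

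The genuine difference is where the uniform bound comes from. The paper quotes Barron's finite-scale sparse estimate \cite{MR3690231} with some $1<\rho<\min_jq_j$. It then uses $\sum_{Q\in\mathcal S}|Q|\prod_j\langle|h_j|\rangle_{\rho,Q}\lesssim\int\prod_jM_\rho h_j\le\prod_j\|M_\rho h_j\|_{q_j}$. This bound is uniform in the cutoffs by construction, and it belongs to the same framework reused in Section~\ref{sec3.2}. Your fallback through a maximal truncation theorem also works and gives a stronger statement. You should, however, record the link you skipped. Since $\chi$ is nonincreasing with $\chi(0)=1$, one has $1-\chi(2^{-\mu}r)=\int\mathbf 1_{\{r>t\}}\,d\nu_\mu(t)$ for the probability measure $d\nu_\mu=-\frac{d}{dt}\chi(2^{-\mu}t)\,dt$. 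Moreover, $T^\nu_{\Omega,\mu}$ is the difference of two such smooth lower truncations. Hence $|T^\nu_{\Omega,\mu}(h_1,h_2)|\le2\sup_{t>0}|T_{\Omega,t}(h_1,h_2)|$ pointwise, where $T_{\Omega,t}$ is the sharp truncation to $\{|(y_1,y_2)|>t\}$. Your first option, reopening Grafakos--He--Honz\'ik, is unnecessary and is the least justified part of the proposal. Their decay estimate is an $L^2\times L^2\to L^1$ statement, and reaching the full Banach range needs further estimates for the pieces, whose uniformity under truncation you would have to recheck.

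Two smaller points need repair. In Step~4 the triple $(3/2,3,3)$ is not admissible, since $\frac23+\frac13+\frac13=\frac43\ne1$, so \eqref{eq:rough-form-bound} does not apply to it. The two slots not carrying $b_1b_2$ hold unweighted $L^\infty_c$ functions, so you can take $(3/2,6,6)$ up to permutation, as the paper does.

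In Step~5 the output functional of a commutator form must be bounded on $L^t(K)$ for some finite $t$, because the dual of $L^\infty(K)$ is not $L^1(K)$. With your Step~4 exponents, terms such as $\Lambda(f_1,f_2,b_1b_2f_3)$ and $\Lambda(b_2f_1,f_2,b_1f_3)$ are controlled only for $f_3\in L^\infty$. You need to build in slack. For instance, take $f_3\in L^6(K)$ and use the triples $(12,12,6/5)$, $(3,6,2)$, $(3,3,3)$ and $(3/2,6,6)$, up to permutation. Alternatively, argue as the paper does. The operator bounds place $T_\Omega(f_1,f_2)$ in $L^3$, $T_\Omega(b_jf_1,f_2)$ and $T_\Omega(b_1f_1,b_2f_2)$ in $L^{3/2}$, and $T_\Omega(b_1b_2f_1,f_2)$ in $L^{6/5}$. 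Multiplying by the output symbols then gives functions in $L^1_{\loc}$.
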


\begin{proof}
For $h_j\in C_c^\infty$, set
\[
\Phi(y_1,y_2)
=
\int_{\mathbb R^d}
h_1(x-y_1)h_2(x-y_2)h_3(x)\,dx.
\]
Then $\Phi\in C_c^\infty(\mathbb R^{2d})$.
Writing $y=(y_1,y_2)$, cancellation gives
\[
\begin{aligned}
\Lambda_\mu^\nu(h_1,h_2,h_3)
={}
\int_{\mathbb R^{2d}}
[\chi(2^{-\nu}|y|)-\chi(2^{-\mu}|y|)]
\frac{\Omega(y/|y|)}{|y|^{2d}}
[\Phi(y)-\Phi(0)\mathbf1_{\{|y|\le1\}}]\,dy.
\end{aligned}
\]
Since $\Phi(y)-\Phi(0)=O(|y|)$ near zero
and $\Phi$ is compactly supported, the integrand
without the cutoff factor is absolutely integrable.
The cutoff factor lies in $[0,1]$ and tends to $1$
for $y\ne0$, so dominated convergence gives the
limit and its agreement with the radial
principal-value form.

Choose $1<\rho<\min_jq_j$.
The finite-scale sparse estimate in
\cite[Theorem~1 and Section~4.2]{MR3690231},
followed by the sparse embedding, yields
\[
\begin{aligned}
\sup_{\mu<\nu}
|\Lambda_\mu^\nu(h_1,h_2,h_3)|
&\lesssim_{d,\rho}
\|\Omega\|_\infty
\sup_{\mathcal S}
\sum_{Q\in\mathcal S}|Q|
\prod_{j=1}^3\langle|h_j|\rangle_{\rho,Q}\\
&\lesssim_{d,\vec q}
\|\Omega\|_\infty
\prod_{j=1}^3\|h_j\|_{q_j}.
\end{aligned}
\]
The last step follows by bounding the sparse sum
by $\int\prod_jM_\rho h_j$ and applying
H\"older's inequality and $\rho<q_j$.

For general $h_j\in L^{q_j}$, choose
$h_j^{(n)}\in C_c^\infty$ converging in $L^{q_j}$.
Multilinearity and the uniform bound above imply
\[
\sup_{\mu<\nu}
\left|
\Lambda_\mu^\nu(h_1,h_2,h_3)
-
\Lambda_\mu^\nu(h_1^{(n)},h_2^{(n)},h_3^{(n)})
\right|
\longrightarrow0.
\]
Since each smooth triple has a limit, the original
truncated forms are Cauchy as
$\mu\to-\infty$ and $\nu\to+\infty$.
Passing to the limit proves
\eqref{eq:rough-form-bound}.

For a commutator of total order $k$,
every slot in its expansion is $b^mf_j$,
where $0\le m\le k$.
These functions lie in $L^3$ when
$b\in L^{3k}_{\loc}$ and $f_j\in L^\infty_c$,
so the exponent triple $(3,3,3)$ gives
termwise convergence.
For two symbols in $L^3_{\loc}$,
use $(3,3,3)$ when the symbols occupy different
slots and $(3/2,6,6)$, with the appropriate
permutation, when they occupy the same slot.

Finally, for bounded compactly supported inputs,
the unweighted bounds give
\[
\begin{aligned}
T_\Omega(f_1,f_2)&\in L^3,\\
T_\Omega(b_jf_1,f_2),\
T_\Omega(f_1,b_jf_2)&\in L^{3/2},\\
T_\Omega(b_1f_1,b_2f_2)&\in L^{3/2},\\
T_\Omega(b_1b_2f_1,f_2),\
T_\Omega(f_1,b_1b_2f_2)&\in L^{6/5}.
\end{aligned}
\]
Since $b_j\in L^3_{\loc}$ and
$b_1b_2\in L^{3/2}_{\loc}$, multiplying by
the remaining output symbols gives an
$L^1_{\loc}$ function in every term.
These functions represent the limiting forms.
\end{proof}

\begin{rem}[Truncation convention]
\label{rem:truncation-convention}
Commutators are understood in the form sense of
Lemma~\ref{lem:rough-form-limit}.
In the sparse arguments, $T$ and $\Lambda$ denote
finite smooth scale truncations and their forms.
Local forms retain the upper cutoff at the top-cube
scale, and the recursion stops at the lower cutoff.
The estimates are uniform in both cutoffs and pass
to the limiting forms by Lemma~\ref{lem:rough-form-limit},
since the sparse bounds are independent of the cutoffs.
\end{rem}

\subsection{Geometric non-degeneracy}
\label{sec:geometric-nondegeneracy}

For $\Omega\in L^\infty(S^{2d-1})$, extended
homogeneously of degree zero, write
\[
K_\Omega(x,y,z)
=
\frac{\Omega(x-y,x-z)}{|(x-y,x-z)|^{2d}}.
\]

\begin{ass}[Geometric non-degeneracy]
\label{ass:rough-geometric-nondegeneracy}
There exist a measurable set $F\subset S^{d-1}$
with $\sigma(F)>0$, constants $\delta,\kappa>0$,
and $|\lambda|=1$ such that
\begin{equation}\label{eq:rough-geometric-nondegeneracy}
\operatorname{Re}
[\lambda\Omega(\theta+h,\theta-h)]
\ge\kappa
\quad\text{for a.e. }
(\theta,h)\in F\times B(0,\delta).
\end{equation}
Here $B(0,\delta)\subset\mathbb R^d$.
\end{ass}

The condition describes two nearby inputs and
a separated output. Indeed, setting
\[
\theta=\frac{x-(y+z)/2}{|x-(y+z)/2|},
\qquad
h=\frac{z-y}{2|x-(y+z)/2|}
\]
gives
\[
(x-y,x-z)
=
\left|x-\frac{y+z}{2}\right|
(\theta+h,\theta-h).
\]
Thus $\theta$ is the direction from the input
midpoint to the output, while $|h|$ measures
the relative input separation.
Positivity is uniform in the small displacement $h$;
the direction set $F$ need not contain an open cap,
and $\Omega$ need not be continuous.
\begin{rem}[Examples]
\label{rem:nondegeneracy-examples}
Assumption~\ref{ass:rough-geometric-nondegeneracy}
holds if $\Omega$ has a representative continuous
at a point
\[
\omega_0=2^{-1/2}(\theta_0,\theta_0),
\qquad \theta_0\in S^{d-1},
\]
with $\Omega(\omega_0)\ne0$.
Indeed, choose
$\lambda=\overline{\Omega(\omega_0)}/|\Omega(\omega_0)|$.
Continuity gives
$\operatorname{Re}(\lambda\Omega)\ge
|\Omega(\omega_0)|/2$
in a neighborhood of $\omega_0$.
The assumption follows by taking $F$ to be
a sufficiently small spherical cap around
$\theta_0$ and choosing $\delta>0$ small enough.

The assumption also includes discontinuous
mean-zero kernels.
To see this, let $F\subset S^{d-1}$ be compact
with positive surface measure, fix $0<\delta<1$,
and set
\[
\Phi(\theta,h)
=
\frac{(\theta+h,\theta-h)}
     {\sqrt{2(1+|h|^2)}},
\qquad
E=\Phi\bigl(F\times B(0,\delta)\bigr).
\]
The map $\Phi$ is a smooth parametrization of
the part of $S^{2d-1}$ where $u+v\ne0$,
with inverse
\[
\theta=\frac{u+v}{|u+v|},
\qquad
h=\frac{u-v}{|u+v|}.
\]
Consequently,
$0<\sigma(E)<\sigma(S^{2d-1})$;
the strict upper bound also follows from
$u\cdot v>0$ on $E$.
Define
\[
\Omega_E
=
\mathbf1_E
-
\frac{\sigma(E)}
     {\sigma(S^{2d-1}\setminus E)}
\mathbf1_{S^{2d-1}\setminus E}.
\]
Then $\Omega_E$ is bounded, has mean zero,
and its homogeneous extension satisfies
\[
\Omega_E(\theta+h,\theta-h)=1,
\qquad
\theta\in F,\quad |h|<\delta.
\]
Thus Assumption~\ref{ass:rough-geometric-nondegeneracy}
holds with $\lambda=\kappa=1$.
For $d\ge2$, one may choose $F$ with empty
interior; then $E$ also has empty interior.

Unlike in the linear homogeneous setting, nonvanishing
of $\Omega$ alone does not imply
Assumption~\ref{ass:rough-geometric-nondegeneracy}.
For example, a nonzero mean-zero angular kernel may
vanish in a neighborhood of all diagonal directions
$2^{-1/2}(\theta,\theta)$.
\end{rem}

\begin{rem}[Lebesgue points and the linear case]
\label{rem:rough-lebesgue-linear}
The bounded measurable function
\[
m_{\lambda,\delta}(\theta)
=
\operatorname*{ess\,inf}_{|h|<\delta}
\operatorname{Re}
[\lambda\Omega(\theta+h,\theta-h)]
\]
gives an equivalent formulation of
Assumption~\ref{ass:rough-geometric-nondegeneracy}:
for some $\lambda,\delta$, it has a Lebesgue point
$\theta_0$ with positive Lebesgue value.
Indeed, the assumption gives
$m_{\lambda,\delta}\ge\kappa$ almost everywhere on $F$.
Conversely, a positive Lebesgue value $c$ implies
that $\{m_{\lambda,\delta}\ge c/2\}$ has density one
at $\theta_0$ and hence positive measure.
For $d=1$, use counting measure on $S^0$.

In the linear homogeneous setting, the displacement
variable is absent, and the condition becomes
$\operatorname{Re}[\lambda\Omega]\ge\kappa$
on a positive-measure set.
By restricting the modulus and phase, this is
equivalent to $\Omega\not\equiv0$ almost everywhere,
and hence to the existence of a Lebesgue point
$\theta_0$ with nonzero Lebesgue value.
This recovers the homogeneous non-degeneracy
condition in
\cite[Definition~2.1.1(2)]{MR4338459}.
Choose the representative of $\Omega$ whose value
at $\theta_0$ equals its nonzero Lebesgue value.
For $K(x,y)=\Omega(x-y)/|x-y|^d$, taking
$x=y+2r\theta_0$ gives
\[
|x-y|=2r,
\qquad
|K(x,y)|=\frac{|\Omega(\theta_0)|}{(2r)^d}.
\]
The bilinear condition additionally requires
uniformity in the relative input displacement.
\end{rem}

The integral defect estimates following from
Assumption~\ref{ass:rough-geometric-nondegeneracy}
are proved in Lemma~\ref{lem:rough-integral-smallness},
at the beginning of Section~\ref{sec4.1}.

\section{A sparse domination principle}\label{sec3.2}

We prove Theorem~\ref{equ1.20} using the localized framework
of Barron \cite{MR3690231} and the frequency decomposition
of Grafakos, He, and Honz\'ik \cite{MR3758426}.
Proposition~\ref{prop:local-rough-estimate} proves
$A_{d,s}\lesssim_d s/(s-1)$ by grouping the frequency pieces
before local interpolation. The grouped kernels have uniform
size bounds and retain the required operator-norm decay.

The commutator argument consists of applying the localized
estimate simultaneously to the symbol-weighted inputs.
At each cube, the symbol is recentered at its local mean,
while the remainder is exactly a sum of commutators on
the stopping children. We give this recursion in detail
for first-order commutators and then extend it by the
binomial identity.

\subsection{The localized rough form}

We use the truncation convention of
Remark~\ref{rem:truncation-convention}.
The local spaces and stopping collections below follow
Barron's notation.

Let \(\mathcal P\) be a stopping collection with top \(Q\): its cubes are
pairwise disjoint dyadic cubes contained in \(3Q\), and, writing
\(s_L=\log_2\ell(L)\), they satisfy
\begin{equation}\label{eq:stopping-geometry}
\begin{split}
 |s_L-s_R|\geq8&\quad\Longrightarrow\quad 7L\cap7R=\emptyset,\\
 \bigcup_{\substack{L\in\mathcal P\\3L\cap2Q\neq\emptyset}}9L
 &\subset \operatorname{sh}\mathcal P
 :=\bigcup_{L\in\mathcal P}L .
\end{split}
\end{equation}
Set \(\widehat L=2^5L\) and \(M_p h=(M(|h|^p))^{1/p}\).  For functions
supported in \(3Q\), define
\[
\|h\|_{\mathcal Y_p(\mathcal P)}
:=
\max\left\{
\|h\mathbf1_{\mathbb R^d\setminus\operatorname{sh}\mathcal P}\|_\infty,
\sup_{L\in\mathcal P}\inf_{x\in\widehat L}M_ph(x)
\right\},\qquad p<\infty,
\]
and \(\|h\|_{\mathcal Y_\infty(\mathcal P)}=\|h\|_\infty\).
The space \(\dot{\mathcal X}_p(\mathcal P)\) consists of functions
\(\beta=\sum_{L\in\mathcal P}\beta_L\), where
\(\operatorname{supp}\beta_L\subset L\), \(\int_L\beta_L=0\), and its norm
is the \(\mathcal Y_p(\mathcal P)\)-norm.  These norms increase with \(p\).
Moreover, the stopping-time Calder\'on--Zygmund decomposition
\begin{equation}\label{eq:stopping-cz}
 h=g+\beta,\qquad
 \beta=\sum_{L\in\mathcal P}
 \bigl(h-\langle h\rangle_L\bigr)\mathbf1_L
\end{equation}
satisfies
\begin{equation}\label{eq:stopping-cz-norm}
 \|g\|_{\mathcal Y_\infty(\mathcal P)}
 +\|\beta\|_{\dot{\mathcal X}_p(\mathcal P)}
 \lesssim_d \|h\|_{\mathcal Y_p(\mathcal P)}.
\end{equation}

For a dyadic cube \(L\), let
\[
\Lambda_L(h_1,h_2,h_3)
:=
\Lambda_\mu^{\min\{s_L,\nu\}}
 (h_1\mathbf1_L,h_2\mathbf1_{3L},h_3\mathbf1_{3L}),
\]
and define
\begin{equation}\label{eq:localized-form}
\Lambda_{\mathcal P}(h_1,h_2,h_3)
:=\Lambda_Q(h_1,h_2,h_3)
 -\sum_{\substack{L\in\mathcal P\\L\subset Q}}
  \Lambda_L(h_1,h_2,h_3).
\end{equation}

\begin{prop}[Localized estimate]\label{prop:local-rough-estimate}
For every $s>1$ there is a constant $A_{d,s}$, independent of
the truncation parameters and the stopping collection, such that
\begin{equation}\label{eq:local-rough}
|\Lambda_{\mathcal P}(h_1,h_2,h_3)|
\le A_{d,s}\|\Omega\|_\infty |Q|
\prod_{i=1}^3\|h_i\|_{\mathcal Y_s(\mathcal P)}.
\end{equation}
The constants may be chosen so that
\begin{equation}\label{eq:quantitative-As}
1\le A_{d,s}\lesssim_d\frac{s}{s-1},
\qquad s>1.
\end{equation}
\end{prop}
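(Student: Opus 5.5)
We follow Barron's proof of the localized estimate \cite[Section~4]{MR3690231}, keeping track of the $s$-dependence. The main step is to group the frequency pieces before interpolating, so that no $s$-dependent loss is multiplied across infinitely many pieces.

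\textbf{Reduction.} Write $\Lambda_{\mathcal P}$ as a sum over the smooth annular scales $a$, so that $T=\sum_a T^{(a)}$. Following Grafakos, He, and Honz\'ik \cite{MR3758426}, split each annular kernel $K^{(a)}$ by a Littlewood--Paley decomposition in frequency, $K^{(a)}=\sum_{j\ge0}K^{(a)}_j$. The piece $K^{(a)}_j$ essentially has frequency $\sim2^{j-a}$ relative to its scale. For $\Omega\in L^\infty$, \cite{MR3758426} gives
\[
\Bigl\|\sum_a T_j^{(a)}\Bigr\|_{L^2\times L^2\to L^1}
\lesssim\|\Omega\|_\infty 2^{-\epsilon_0 j}
\]
for some fixed $\epsilon_0>0$. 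Each $T^{(a)}_j$ also satisfies Calder\'on--Zygmund smoothness with constant $\lesssim 2^{j}\|\Omega\|_\infty$, and size bounds uniform in $j$.

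\textbf{Interpolation for each $j$.} Fix $j$. Decompose each $h_i=g_i+\beta_i$ by \eqref{eq:stopping-cz} and expand $\Lambda_{\mathcal P}$ trilinearly.
\begin{itemize}
\item The terms with all three inputs good are handled by the $L^2$ bound: $|\Lambda^{j}_{\mathcal P}(g_1,g_2,g_3)|\lesssim 2^{-\epsilon_0 j}|Q|\prod\|g_i\|_\infty$. The stopping-collection geometry \eqref{eq:stopping-geometry} localizes everything to $3Q$.
\item Terms containing some $\beta_i$ use the mean zero of $\beta_L$ together with smoothness at scales $2^a\gg\ell(L)$. This gives a factor $2^{j}2^{-(a-s_L)}$. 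At scales comparable to $\ell(L)$ one uses only the size bound, and the $\mathcal Y_1$-control of $\beta$ then gives $\lesssim (1+j)|Q|\prod\|h_i\|_{\mathcal Y_1}$.
\end{itemize}
Combining the two, we get for each $j$ an $L^\infty$-type bound $2^{-\epsilon_0 j}$ and a $\mathcal Y_1$-type bound $\lesssim j$.

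The bound needed is at $\mathcal Y_s$. A crude route would interpolate via H\"older in the $M_s$-averages: write $|\beta|\le |\beta|^{\theta}\cdot$(bounded part), or use the sublevel decomposition $h=h\mathbf1_{|h|\le\lambda}+h\mathbf1_{|h|>\lambda}$ at height $\lambda\sim\|h\|_{\mathcal Y_s}2^{\gamma j}$. This yields, for each $j$, a bound of the form
\[
|Q|\prod\|h_i\|_{\mathcal Y_s}\cdot 2^{-c(1-1/s)j}\cdot\mathrm{poly}(j).
\]
Summing over $j$ would give $\sum_j j\,2^{-c(s-1)j/s}\sim (s/(s-1))^2$. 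That is too lossy.

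\textbf{Grouping (the main obstacle).} To reach $s/(s-1)$, group the frequency pieces into blocks $G_N=\{j: 2^N\le j<2^{N+1}\}$ before interpolating.
\begin{itemize}
\item For the grouped kernel $K_{G_N}=\sum_{j\in G_N}K_j$, the size bound stays uniform; it is no worse than the full truncated kernel.
\item Its smoothness constant is $\lesssim 2^{2^{N+1}}$. This enters only logarithmically in the $\mathcal Y_1$ estimate, through the number of scales $a-s_L\lesssim 2^{N}$, giving a $\mathcal Y_1$-bound $\lesssim 2^N$.
\item Its $L^2$-decay is $\lesssim 2^{-\epsilon_0 2^N}$.
\end{itemize}
Interpolating between $\mathcal Y_1$ with constant $2^N$ and $\mathcal Y_\infty$ with decay $2^{-\epsilon_0 2^N}$, with exponent $\theta=1-1/s$, gives a block contribution
\[
\lesssim 2^{N(1-\theta)}2^{-\epsilon_0\theta 2^N}.
\]
Summing over $N$ gives roughly
\[
1+\#\{N:2^N\lesssim 1/\theta\}\cdot\theta^{-1}\sim\frac{s}{s-1}.
\]
The sum is dominated by the $N$ with $2^N\approx 1/\theta$, where the block contributes at most $2^{N}\approx 1/\theta$. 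The terms with smaller $N$ form a geometric sum of size $\lesssim 1/\theta$, and the terms with larger $N$ decay super-exponentially.

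The delicate point is making the interpolation rigorous for trilinear forms on the nonstandard $\mathcal Y_p(\mathcal P)$ spaces. We would do it by hand with the level-set splitting of the $h_i$ at height $\lambda$ chosen depending on $N$. The $\mathcal Y_1$ part uses $\|h\mathbf1_{|h|>\lambda}\|_{\mathcal Y_1}\le\lambda^{1-s}\|h\|_{\mathcal Y_s}^s$, and we must check that the constants remain uniform in the truncation parameters and in $\mathcal P$.

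Finally, $A_{d,s}\ge1$ can be arranged trivially by enlarging the constant.
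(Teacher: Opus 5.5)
Your proposal is correct and follows essentially the paper's argument: group the Grafakos--He--Honz\'ik pieces into blocks $2^n\le|j|<2^{n+1}$, use a uniform size bound together with smoothness $2^{2^{n+1}}$ to get a cancellation-endpoint constant $\lesssim 2^n$ against $L^2\times L^2\to L^1$ decay $2^{-c2^n}$, interpolate by Barron's level-set threshold argument (recentering the cancellative input on each stopping cube), and sum $\sum_n 2^n2^{-c(s-1)2^n}\lesssim (s-1)^{-1}$. The one step you assert without proof, the uniform size bound for the grouped kernel, is justified in the paper by taking the cutoffs in telescoping form $\psi_t=\Phi_t-\Phi_{t+1}$, so that each block sum collapses to a difference of two smoothings $H_r=\sum_i K^i*\Phi_{i-r}$; each of these is bounded by $|W|^{-2d}$ uniformly in $r$, using $\|K^i\|_\infty\lesssim 2^{-2di}$ for $r\ge 0$ and $\|K^i\|_1\lesssim 1$ for $r<0$.
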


\begin{proof}
By homogeneity, it suffices to assume
$\|\Omega\|_\infty=1$ and $1<s\le2$.
We follow the localized argument of
\cite[Sections~3--4]{MR3690231}, keeping track of
the constants after grouping the frequency pieces.

Use the decomposition
\[
T_\Omega=\sum_{j\in\mathbb Z}T^j,
\qquad
K_j=\sum_{i\in\mathbb Z}K^i*\psi_{i-j},
\]
from \cite[Section~2]{MR3758426}, where $K^i$ is
supported on $|W|\simeq2^i$ and
\[
\|K^i\|_\infty\lesssim_d2^{-2di},
\qquad
\|K^i\|_1\lesssim_d1.
\]
The frequency estimates in
\cite[Propositions~3 and~5]{MR3758426},
also recorded in \cite[Proposition~4.1]{MR3690231},
give
\begin{equation}\label{eq:frequency-operator-decay}
\|T^j\|_{L^2\times L^2\to L^1}
\lesssim_d2^{-c|j|}
\end{equation}
for some dimensional $c>0$.
Thus the series can be regrouped in operator norm.
Set
\[
I_n=\{j:2^n\le |j|<2^{n+1}\},
\qquad
\widetilde T_n=\sum_{j\in I_n}T^j,
\qquad
\widetilde K_n=\sum_{j\in I_n}K_j.
\]
Then
\[
T_\Omega=T^0+\sum_{n\ge0}\widetilde T_n,
\qquad
\|\widetilde T_n\|_{L^2\times L^2\to L^1}
\lesssim_d2^{-c2^n}.
\]

To avoid a factor counting the pieces in $I_n$,
choose the cutoffs in telescoping form,
\[
\psi_t=\Phi_t-\Phi_{t+1},
\qquad
\Phi_t(W)=2^{-2dt}\Phi(2^{-t}W),
\]
where $\widehat\Phi$ is a smooth radial cutoff
equal to one near the origin.
Put
\[
H_r=\sum_iK^i*\Phi_{i-r}.
\]
The annular support of $K^i$ and the Schwartz
bounds for $\Phi$ imply
\begin{equation}\label{eq:frequency-size-gradient}
|H_r(W)|\lesssim_d|W|^{-2d},
\qquad
|\nabla H_r(W)|
\lesssim_d2^{\max\{r,0\}}|W|^{-2d-1}.
\end{equation}
Indeed, for $r\ge0$ the scale-$i$ bounds are
\[
|\nabla^m(K^i*\Phi_{i-r})(W)|
\lesssim_d
2^{mr}2^{-(2d+m)i}
(1+2^{-i}|W|)^{-2d-2},
\qquad m=0,1.
\]
For $r<0$, use $\|K^i\|_1\lesssim_d1$ at
the smoothing scale $2^{i-r}$.
Summing the resulting bounds proves
\eqref{eq:frequency-size-gradient}.

Since $K_j=H_j-H_{j-1}$, we have
\[
\widetilde K_n
=
H_{2^{n+1}-1}-H_{2^n-1}
+H_{-2^n}-H_{-2^{n+1}}.
\]
Consequently,
\[
|\widetilde K_n(W)|\lesssim_d|W|^{-2d},
\]
and, for $|V|\le |W|/2$,
\begin{equation}\label{eq:frequency-modulus}
\begin{aligned}
|\widetilde K_n(W-V)-\widetilde K_n(W)|
&\lesssim_d |W|^{-2d}
\min\left\{
1,2^{2^{n+1}}\frac{|V|}{|W|}
\right\}.
\end{aligned}
\end{equation}
The kernel $K_0=H_0-H_{-1}$ has dimensional
size and smoothness constants.

These estimates are compatible with the fixed
smooth spatial truncations.
Indeed, multiplication of a convolution kernel
by $\varphi$ increases its
$L^2\times L^2\to L^1$ operator norm by at most
$\|\widehat\varphi\|_1$, by Fourier inversion
and modulation invariance.
For the difference of dilates of $\chi$ used here,
this norm is uniformly bounded.
Let $\widetilde\Lambda_{\mathcal P}^{n}$
denote the localized form of $\widetilde T_n$.
The truncated operator bound and the stopping
geometry therefore give
\[
\begin{aligned}
|\widetilde\Lambda_{\mathcal P}^{n}(u_1,u_2,u_3)|
\le A_2(n)|Q|
\|u_1\|_{\mathcal Y_2}
\|u_2\|_{\mathcal Y_2}
\|u_3\|_{\mathcal Y_\infty},
\quad
A_2(n)\lesssim_d2^{-c2^n}.
\end{aligned}
\]

For the cancellation endpoint, apply the argument
of \cite[Proposition~3.3]{MR3690231} with
\eqref{eq:frequency-modulus}.
At separation $\ell$, the kernel difference and
the annular cutoff contribute
\[
\omega_n(\ell)
=
\min\{1,2^{2^{n+1}-\ell}\}+2^{-\ell}.
\]
Since
$
\sum_{\ell\ge1}\omega_n(\ell)\lesssim2^n,
$
that argument yields
\[
\begin{aligned}
|\widetilde\Lambda_{\mathcal P}^{n}(\beta,g,h)|
\le A_1(n)|Q|
\|\beta\|_{\dot{\mathcal X}_1}
\|g\|_{\mathcal Y_p}
\|h\|_{\mathcal Y_p},
\quad
A_1(n)\lesssim_d2^n,
\quad 1<p\le2.
\end{aligned}
\]
The constant is uniform in $p$.
Indeed, for a stopping cube $L$ and scale
$a=s_L+\ell$, the remaining variables are contained
in a cube $R_{L,\ell}\supset\widehat L$ with
$|R_{L,\ell}|\simeq_d2^{da}$.
For $v=g,h$,
\[
\langle|v|\rangle_{1,R_{L,\ell}}
\le \langle|v|\rangle_{p,R_{L,\ell}}
\le \inf_{x\in\widehat L}M_pv(x)
\le \|v\|_{\mathcal Y_p}.
\]
Thus cancellation and \eqref{eq:frequency-modulus}
bound the contribution of $L$ at this scale by
\[
C_d\omega_n(\ell)|L|\,
\|\beta\|_{\dot{\mathcal X}_1}
\|g\|_{\mathcal Y_p}\|h\|_{\mathcal Y_p},
\]
after a fixed dimensional adjustment of
$\omega_n$; the finitely many near scales follow
from the size bound.
Summing over the disjoint stopping cubes and using
$\sum_{\ell\ge1}\omega_n(\ell)\lesssim_d2^n$
gives the claimed bound, uniformly for $1<p\le2$.

For cancellation in the other positions, use the
representations in
\cite[Proposition~2.1 and Appendix~A]{MR3690231}.
Their boundary terms satisfy the same uniform estimate
by the single-scale size bound, and the decay estimate
applies to the full localized form.

We now apply the threshold argument of
\cite[Lemmas~4.3--4.4]{MR3690231}, recording its
uniform dependence on the parameters.
Set
\[
\delta=\frac{s-1}{16},
\qquad p=1+4\delta,
\qquad q=1+8\delta<s.
\]
For cancellation in the first position, normalize
\[
\|u_1\|_{\dot{\mathcal X}_q}
=\|u_2\|_{\mathcal Y_q}
=\|u_3\|_{\mathcal Y_s}=1.
\]
Split each input at height $\lambda\geq 1$, writing
\[
u_i=u_i^{\mathrm{lo}}+u_i^{\mathrm{hi}},
\]
and subtract the average of $u_1^{\mathrm{hi}}$ on each
stopping cube. Thus both $u_1^{\mathrm{lo}}$ and
$u_1^{\mathrm{hi}}$ have the required cancellation.
The elementary truncation estimates give
\[
\|u_1^{\mathrm{hi}}\|_{\dot{\mathcal X}_1}
   \lesssim_d \lambda^{1-q},
\qquad
\|u_2^{\mathrm{hi}}\|_{\mathcal Y_p}
   \lesssim_d \lambda^{1-q/p},
\qquad
\|u_3^{\mathrm{hi}}\|_{\mathcal Y_p}
   \lesssim_d \lambda^{1-s/p},
\]
\[
\|u_1^{\mathrm{lo}}\|_{\dot{\mathcal X}_2}
   +\|u_2^{\mathrm{lo}}\|_{\mathcal Y_2}
   \lesssim_d \lambda^{1-q/2},
\qquad
\|u_3^{\mathrm{lo}}\|_\infty\leq\lambda.
\]
All other norms needed at the cancellation
endpoint are bounded dimensionally.
These estimates use only pointwise truncation
inequalities and Jensen's inequality, so their
constants remain bounded as $\delta\to0$.

The terms containing a high part can be grouped as
\[
\widetilde{\Lambda}_{\mathcal P}^n
   (u_1^{\mathrm{hi}},u_2,u_3)
+\widetilde{\Lambda}_{\mathcal P}^n
   (u_1^{\mathrm{lo}},u_2^{\mathrm{hi}},u_3)
+\widetilde{\Lambda}_{\mathcal P}^n
   (u_1^{\mathrm{lo}},u_2^{\mathrm{lo}},u_3^{\mathrm{hi}}).
\]
The cancellation endpoint bounds their sum in
absolute value by
\[
C_dA_1(n)|Q|
\bigl(\lambda^{1-q}
+\lambda^{1-q/p}
+\lambda^{1-s/p}\bigr)
\lesssim_d A_1(n)|Q|\lambda^{1-q/p}.
\]
The remaining all-low term
\[
\widetilde{\Lambda}_{\mathcal P}^n
  (u_1^{\mathrm{lo}},u_2^{\mathrm{lo}},u_3^{\mathrm{lo}})
\]
is bounded in absolute value by
$C_dA_2(n)|Q|\lambda^{3-q}$.
Hence
\[
|\widetilde\Lambda_{\mathcal P}^{n}(u_1,u_2,u_3)|
\lesssim_d |Q|
\bigl(A_1(n)\lambda^{1-q/p}
+A_2(n)\lambda^{3-q}\bigr).
\]
Choose endpoint upper bounds with $A_1(n)\ge A_2(n)>0$
and take
\[
\lambda=
\left(\frac{A_1(n)}{A_2(n)}\right)^{1/(2-q+q/p)}.
\]
The resulting exponent satisfies
\[
\alpha=\frac{q/p-1}{2-q+q/p}
=\frac{2\delta}{1+2\delta-16\delta^2}
\ge\delta.
\]
Restoring the input norms, we obtain
\[
\begin{aligned}
|\widetilde\Lambda_{\mathcal P}^{n}(u_1,u_2,u_3)|
\lesssim_d{}&
2^{n(1-\delta)}2^{-c\delta2^n}|Q|
\|u_1\|_{\dot{\mathcal X}_q}
\|u_2\|_{\mathcal Y_q}
\|u_3\|_{\mathcal Y_s}.
\end{aligned}
\]
Interchanging the first two inputs gives the
second-position estimate.
For cancellation in the third position, apply the same
decomposition with the third input in $\dot{\mathcal X}_s$
and the first two in $\mathcal Y_q$. Recenter
$u_3^{\mathrm{hi}}$ on each stopping cube, so that both
$u_3^{\mathrm{lo}}$ and $u_3^{\mathrm{hi}}$ retain
cancellation. Moreover,
\[
\|u_3^{\mathrm{hi}}\|_{\dot{\mathcal X}_1}
   \lesssim_d \lambda^{1-s},
\qquad
\|u_3^{\mathrm{lo}}\|_{\dot{\mathcal X}_1}
   \lesssim_d 1,
\qquad
\|u_3^{\mathrm{lo}}\|_\infty
   \lesssim_d \lambda.
\]
Using cancellation in the third position for the
terms containing a high part gives the same bound
$C_dA_1(n)|Q|\lambda^{1-q/p}$.
The all-low term is again bounded by
$C_dA_2(n)|Q|\lambda^{3-q}$, so the same optimization
applies.
The form associated with $T^0$ is treated with
dimensional endpoint constants.

Finally,
\[
\sum_{n\ge0}
2^{n(1-\delta)}2^{-c\delta2^n}
\le
\sum_{n\ge0}2^n2^{-c\delta2^n}
\lesssim_d\delta^{-1}.
\]
Indeed, the sum over $2^n\le\delta^{-1}$ is
$O(\delta^{-1})$.
If $n_0$ is the first remaining index, then
$1<\delta2^{n_0}\le2$, and the tail is at most
\[
2\delta^{-1}
\sum_{r\ge0}2^r2^{-c2^r}
\lesssim_d\delta^{-1}.
\]
Summation and monotonicity of the local norms give
\begin{equation}\label{eq:local-tests}
\begin{aligned}
|\Lambda_{\mathcal P}(\beta_1,u_2,u_3)|
&\lesssim_d(s-1)^{-1}|Q|
\|\beta_1\|_{\dot{\mathcal X}_s}
\|u_2\|_{\mathcal Y_s}
\|u_3\|_{\mathcal Y_s},
\\
|\Lambda_{\mathcal P}(g_1,\beta_2,u_3)|
&\lesssim_d(s-1)^{-1}|Q|
\|g_1\|_{\mathcal Y_\infty}
\|\beta_2\|_{\dot{\mathcal X}_s}
\|u_3\|_{\mathcal Y_s},
\\
|\Lambda_{\mathcal P}(g_1,g_2,\beta_3)|
&\lesssim_d(s-1)^{-1}|Q|
\|g_1\|_{\mathcal Y_\infty}
\|g_2\|_{\mathcal Y_\infty}
\|\beta_3\|_{\dot{\mathcal X}_s}.
\end{aligned}
\end{equation}
These estimates extend from bounded inputs by
amplitude truncation, with recentering on each
stopping cube for cancellation inputs.
The local norms remain uniformly bounded by
\eqref{eq:stopping-cz-norm}, and convergence in
$L^1(3Q)$ permits passage to the limit for each
fixed spatial truncation.

Apply \eqref{eq:stopping-cz} to write
$h_i=g_i+\beta_i$.
The all-good term is bounded by
$C_d|Q|\prod_i\|g_i\|_\infty$.
The remaining terms are
\[
\Lambda_{\mathcal P}(\beta_1,h_2,h_3)
+\Lambda_{\mathcal P}(g_1,\beta_2,h_3)
+\Lambda_{\mathcal P}(g_1,g_2,\beta_3),
\]
and are controlled by \eqref{eq:local-tests}
and \eqref{eq:stopping-cz-norm}.
This proves
$A_{d,s}\lesssim_d(s-1)^{-1}$ for $1<s\le2$.
For $s>2$, use the estimate at $s=2$ and
monotonicity.
Restoring $\|\Omega\|_\infty$ proves the proposition.
\end{proof}

We shall also use the following standard consequence of the Whitney selection
in Barron's stopping construction.  If \(\mathcal H\) is a finite family of 
$L^s$ functions supported in \(3Q\), then there is a stopping collection
\(\mathcal P(Q)\) satisfying \eqref{eq:stopping-geometry} such that
\begin{equation}\label{eq:stopping-control}
\sum_{\substack{L\in\mathcal P(Q)\\L\subset Q}}|L|
\leq \frac12|Q|,
\qquad
\|h\|_{\mathcal Y_s(\mathcal P(Q))}
\lesssim_{d,\#\mathcal H}
\langle |h|\rangle_{s,3Q}
\quad(h\in\mathcal H).
\end{equation}
Let $\mathcal H$ consist of functions in $L^s(3Q)$, extended by
zero outside $3Q$. Define the global exceptional set
\begin{equation}\label{eq:exceptional-set}
E_Q=\left\{x\in\mathbb R^d:
\max_{\substack{h\in\mathcal H\\
                 \langle|h|\rangle_{s,3Q}>0}}
\frac{M_s h(x)}{\langle|h|\rangle_{s,3Q}}>A\right\}.
\end{equation}
If all functions vanish, take $E_Q=\emptyset$.
The weak type $(1,1)$ estimate for $M$ gives
\[
|E_Q|\le C_d(\#\mathcal H)A^{-s}|3Q|.
\]
Choose $A=A(d,\#\mathcal H)\ge1$ sufficiently large that
\[
|E_Q|\le\delta_d|Q|,
\qquad
0<\delta_d\le\min\{1/2,(9/100)^d\}.
\]
This choice is uniform for $s>1$.

Let $\mathcal W_Q$ be the maximal dyadic cubes $L$ satisfying
$9L\subset E_Q$, and set
\[
\mathcal P(Q)=\{L\in\mathcal W_Q:L\subset3Q\}.
\]
The cubes in $\mathcal W_Q$ are pairwise disjoint and cover $E_Q$
up to a null set. Moreover,
\[
9^d|L|\le |E_Q|\le\delta_d|Q|,
\qquad \ell(L)\le\ell(Q)/100.
\]
Since $3Q$ is a union of dyadic cubes of side length $\ell(Q)$,
each such $L$ meeting $3Q$ is contained in $3Q$, up to boundaries.
Thus
\[
\operatorname{sh}\mathcal P(Q)=E_Q\cap3Q
\quad\text{up to a null set}.
\]
The usual Whitney size comparison gives the first condition in
\eqref{eq:stopping-geometry}. If $3L\cap2Q\ne\emptyset$,
the bound $\ell(L)\le\ell(Q)/100$ implies $9L\subset3Q$.
Hence $9L\subset E_Q\cap3Q$, which gives the second condition.

Let $L^{(1)}$ be the dyadic parent of $L\in\mathcal P(Q)$.
Maximality in the global Whitney collection gives a point
\[
x_L\in9L^{(1)}\setminus E_Q\subset\widehat L.
\]
Consequently, for every nonzero $h\in\mathcal H$,
\[
\inf_{x\in\widehat L}M_s h(x)
\le M_s h(x_L)\le A\langle|h|\rangle_{s,3Q}.
\]
Outside the shadow, the same bound holds almost everywhere for
$|h|$, by Lebesgue differentiation. Therefore
\[
\|h\|_{\mathcal Y_s(\mathcal P(Q))}
\lesssim_{d,\#\mathcal H}\langle|h|\rangle_{s,3Q}.
\]
Finally,
\[
\sum_{\substack{L\in\mathcal P(Q)\\L\subset Q}}|L|
\le |E_Q|\le\frac12|Q|.
\]

\subsection{First-order commutators}

\begin{lem}\label{lem:first-order-commutator-sparse}
Let \(b\in L^3_{\loc}(\mathbb R^d)\) and \(s>1\). For bounded,
compactly supported \(f_1,f_2,f_3\),
\begin{equation}\label{eq:first-commutator-sparse}
\begin{aligned}
|\langle[b,T_\Omega]_1(f_1,f_2),f_3\rangle|
\lesssim A_{d,s}\|\Omega\|_\infty\Bigg[
&\sup_{\mathcal S}\sum_{Q\in\mathcal S}|Q|
 \langle|f_1|\rangle_{s,Q}\langle|f_2|\rangle_{s,Q}
 \langle|b-\langle b\rangle_Q||f_3|\rangle_{s,Q}\\
{}+&\sup_{\mathcal S}\sum_{Q\in\mathcal S}|Q|
 \langle|b-\langle b\rangle_Q||f_1|\rangle_{s,Q}
 \langle|f_2|\rangle_{s,Q}\langle|f_3|\rangle_{s,Q}
\Bigg].
\end{aligned}
\end{equation}
The analogous estimate for \([b,T_\Omega]_2\) has the oscillation in
the second input in the second sparse form.
\end{lem}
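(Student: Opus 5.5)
We plan to run Barron's stopping-time recursion with the commutator kept as a form. The key point is to recenter the symbol at each top cube. Throughout we use the finite truncations of Remark~\ref{rem:truncation-convention}, so every form is finite and the recursion terminates at the lower cutoff. All bounds will be uniform in $\mu,\nu$, and Lemma~\ref{lem:rough-form-limit} then passes them to the limit. By the usual reduction we may take $f_1,f_2,f_3$ supported in a dyadic cube $Q_0$ of scale $\ge\nu$ from one of the adjacent lattices. Then the full truncated commutator form equals the local form $\Lambda_{Q_0}$ of the commutator.

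\textbf{Local identity.} Fix a cube $Q$ and set $b_Q:=b-\langle b\rangle_Q$. The commutator $[b,T]_1$ is unchanged when $b$ is replaced by $b-c$ for any constant $c$. Hence, at the level of local forms,
\[
\Lambda^{[b]}_Q(f_1,f_2,f_3)=\Lambda_Q(f_1,f_2,b_Qf_3)-\Lambda_Q(b_Qf_1,f_2,f_3).
\]
Now insert the decomposition \eqref{eq:localized-form} for each of the two terms, with the same stopping collection $\mathcal P(Q)$. This gives
\[
\Lambda^{[b]}_Q(f_1,f_2,f_3)=\Lambda_{\mathcal P(Q)}(f_1,f_2,b_Qf_3)-\Lambda_{\mathcal P(Q)}(b_Qf_1,f_2,f_3)+\sum_{L\in\mathcal P(Q),\,L\subset Q}\bigl[\Lambda_L(f_1,f_2,b_Qf_3)-\Lambda_L(b_Qf_1,f_2,f_3)\bigr].
\]
Each bracket in the sum is again $\Lambda^{[b]}_L(f_1,f_2,f_3)$, because the constant $\langle b\rangle_Q$ cancels. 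So the remainder is exactly a sum of local commutator forms on the stopping children, and we recurse on it.

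\textbf{Choice of $\mathcal P(Q)$ and the local bound.} Take $\mathcal H=\{f_1\mathbf1_{3Q},f_2\mathbf1_{3Q},f_3\mathbf1_{3Q},b_Qf_1\mathbf1_{3Q},b_Qf_3\mathbf1_{3Q}\}$. These lie in $L^s(3Q)$ at least for $s\le3$ since $b\in L^3_{\loc}$; for larger $s$ both sides of the claim may be infinite, or we argue by monotonicity. Build $\mathcal P(Q)$ by \eqref{eq:stopping-control}; this requires $\#\mathcal H=5$, a fixed number, so the implied constant is dimensional. Proposition~\ref{prop:local-rough-estimate} then bounds the two localized terms by
\[
A_{d,s}\|\Omega\|_\infty|Q|\Bigl(\langle|f_1|\rangle_{s,3Q}\langle|f_2|\rangle_{s,3Q}\langle|b_Q f_3|\rangle_{s,3Q}+\langle|b_Qf_1|\rangle_{s,3Q}\langle|f_2|\rangle_{s,3Q}\langle|f_3|\rangle_{s,3Q}\Bigr).
\]

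\textbf{Iteration and sparsity.} We iterate over the stopping children, collecting all cubes produced into a family $\mathcal S$. By \eqref{eq:stopping-control} this family is $\tfrac12$-sparse: take $E_Q=Q\setminus\bigcup_{L\subset Q}L$. The recursion ends because the cube scales are bounded below by the truncation $\mu$. It remains to replace $3Q$ by a single cube in the averages, which is where we expect the main obstacle. The recentering constant is $\langle b\rangle_Q$, whereas the averages are taken over $3Q$. To fix the mismatch, use the adjacent-lattice fact to enclose $3Q$ in a cube $R$ with $\ell(R)\lesssim_d\ell(Q)$ taken from finitely many lattices. Then use
\[
|b-\langle b\rangle_Q|\le|b-\langle b\rangle_R|+|\langle b\rangle_R-\langle b\rangle_Q|.
\]
For the constant term, bound $|\langle b\rangle_R-\langle b\rangle_Q|\lesssim_d\langle|b-\langle b\rangle_R|\rangle_R$, and pull the factor $\langle|b-\langle b\rangle_R|\rangle_R$ against the unweighted average of $f_j$. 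This is the delicate point, because the product $\langle|b-\langle b\rangle_R|\rangle_R\langle|f_3|\rangle_{s,R}$ must be compared with $\langle|b-\langle b\rangle_R||f_3|\rangle_{s,R}$, and that comparison is false pointwise. To avoid it, recenter from the start at $b_R$ instead of $b_Q$. Since any constant works in the identity, the remainder terms are still genuine commutators on the children, and the stopping family $\mathcal H$ can equally use $b_R$. With this choice the averages are over cubes $R\supset3Q$ of comparable size, and the collection $\{R\}$ remains sparse with a dimensional parameter. Splitting the family by lattice gives finitely many sparse sums, each bounded by the supremum over sparse collections. This yields \eqref{eq:first-commutator-sparse}. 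The estimate for $[b,T_\Omega]_2$ follows identically with $b_Qf_2$ in place of $b_Qf_1$.
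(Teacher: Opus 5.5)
Your proposal is correct and follows the paper's argument. You use the same recentered two-term identity, the same five-function stopping data, Proposition~\ref{prop:local-rough-estimate}, and recursion on the stopping children, and you resolve the mismatch the same way the paper does, by recentering at the average over the enlarged cube on which the final averages are taken.

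The only difference is that the paper simply takes $R=3Q$, i.e.\ $c_Q=\langle b\rangle_{3Q}$, and observes that $\{3Q:Q\in\mathcal T\}$ is $(2\cdot3^d)^{-1}$-sparse. Since the sparse collections in the statement need not be dyadic, your adjacent-lattice enlargement is unnecessary, and so is the bounded-multiplicity check for $Q\mapsto R(Q)$ that it would require.
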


\begin{proof}
It suffices to prove the estimate for $1<s\le2$.
Indeed, the estimate at $s=2$ implies the estimate for every
$s>2$ by monotonicity of normalized averages, after enlarging
the dimensional implicit constant.
Assume $1<s\le2$ and use the truncation convention.
Choose a top cube $Q$ containing the support of $f_1$,
with $3Q$ containing the other supports, and large
enough to contain all retained scales.
Put $c_Q=\langle b\rangle_{3Q}$.
All stopping inputs below belong to $L^s(3Q)$ because
$b\in L^3_{\loc}$.

Let $\mathcal C_Q^1$ be the first-position commutator
form associated with $\Lambda_Q$.
Since a commutator is
unchanged when a constant is subtracted from \(b\),
\begin{equation}\label{eq:first-local-algebra}
\mathcal C_Q^1
=\Lambda_Q(f_1,f_2,(b-c_Q)f_3)
 -\Lambda_Q((b-c_Q)f_1,f_2,f_3).
\end{equation}

Apply \eqref{eq:stopping-control} to the five functions
\[
\mathcal H_Q=
\{f_1,f_2,f_3,(b-c_Q)f_1,(b-c_Q)f_3\}\mathbf1_{3Q}.
\]

Using the same constant \(c_Q\) in the identity for every child cube
\(L\in\mathcal P(Q)\), \(L\subset Q\), and subtracting those child
identities from \eqref{eq:first-local-algebra}, we obtain the exact formula
\begin{equation}\label{eq:commutator-localization}
\begin{aligned}
\mathcal C_Q^1 -\sum_{\substack{L\in\mathcal P(Q)\\L\subset Q}}
\mathcal C_L^1
={}&\Lambda_{\mathcal P(Q)}(f_1,f_2,(b-c_Q)f_3)\\
&-\Lambda_{\mathcal P(Q)}((b-c_Q)f_1,f_2,f_3).
\end{aligned}
\end{equation}
By \eqref{eq:stopping-control} and
Proposition~\ref{prop:local-rough-estimate},
\begin{equation}\label{eq:first-local-bound}
\begin{aligned}
\left|\mathcal C_Q^1-\sum_{\substack{L\in\mathcal P(Q)\\L\subset Q}}
\mathcal C_L^1\right|
\lesssim A_{d,s}\|\Omega\|_\infty |Q|\Big(&
\langle|f_1|\rangle_{s,3Q}\langle|f_2|\rangle_{s,3Q}
\langle|b-c_Q||f_3|\rangle_{s,3Q}\\
&+\langle|b-c_Q||f_1|\rangle_{s,3Q}
\langle|f_2|\rangle_{s,3Q}\langle|f_3|\rangle_{s,3Q}\Big).
\end{aligned}
\end{equation}

Iterate \eqref{eq:first-local-bound}, taking
$c_L=\langle b\rangle_{3L}$ at each child $L$.
The recursion stops at the lower cutoff and produces
a $1/2$-sparse tree $\mathcal T$.
Choose pairwise disjoint sets $F_Q\subset Q$ with
$|F_Q|\ge |Q|/2$. Since
\[
F_Q\subset3Q,
\qquad
|F_Q|\ge\frac{1}{2\cdot3^d}|3Q|,
\]
the collection $\{3Q:Q\in\mathcal T\}$ is
$(2\cdot3^d)^{-1}$-sparse.
Relabeling the dilated cubes gives
\eqref{eq:first-commutator-sparse}.

The second-position commutator is treated in the same way.
\end{proof}

\subsection{Higher-order commutators}

\begin{proof}[Proof of Theorem~\ref{equ1.20}]
For $k=0$, apply
Proposition~\ref{prop:local-rough-estimate} and
\eqref{eq:stopping-control} with stopping data
\[\{f_1\mathbf1_{3Q},f_2\mathbf1_{3Q},f_3\mathbf1_{3Q}\}\]
at each top cube $Q$.
The finite-scale recursion and dilation argument in the
preceding proof give
\[
|\Lambda_\mu^\nu(f_1,f_2,f_3)|
\lesssim_d A_{d,s}\|\Omega\|_\infty
\sup_{\mathcal S}\sum_{Q\in\mathcal S}|Q|
\prod_{i=1}^3\langle|f_i|\rangle_{s,Q}.
\]
The truncation convention and
\eqref{eq:quantitative-As} give the assertion.
We may therefore assume that $k\ge1$.
It suffices to prove the estimate for $1<s\le2$.
Indeed, the estimate at $s=2$ implies the result for $s>2$
by monotonicity of normalized averages.

Use the truncation convention and choose a top cube
$Q$ as in the proof of
Lemma~\ref{lem:first-order-commutator-sparse}.
Put $c_Q=\langle b\rangle_{3Q}$.
Since $b\in L^{3k}_{\loc}$, every function
$(b-c_Q)^m f_j\mathbf1_{3Q}$ with $0\le m\le k$
belongs to $L^s(3Q)$ for $1<s\le2$.

Let $\mathcal C_Q^{k_1,k_2}$ denote the iterated
commutator form associated with $\Lambda_Q$.
Expanding
$(b(x)-b(y_1))^{k_1}(b(x)-b(y_2))^{k_2}$
around $c_Q$ gives the exact identity
\begin{equation}\label{eq:binomial-expansion}
\begin{aligned}
\mathcal C_Q^{k_1,k_2}=\sum_{m_1=0}^{k_1}\sum_{m_2=0}^{k_2}&
(-1)^{m_1+m_2}\binom{k_1}{m_1}\binom{k_2}{m_2}\\
&\quad\times
\Lambda_Q\bigl((b-c_Q)^{m_1}f_1,(b-c_Q)^{m_2}f_2,
               (b-c_Q)^{k-m_1-m_2}f_3\bigr).
\end{aligned}
\end{equation}
Use \eqref{eq:stopping-control} with
\begin{equation}\label{eq:higher-stopping-data}
\begin{aligned}
\mathcal H_Q^{(k)}={}&
\{(b-c_Q)^m f_1\mathbf1_{3Q}:0\le m\le k_1\}\\
&\cup\{(b-c_Q)^m f_2\mathbf1_{3Q}:0\le m\le k_2\}\\
&\cup\{(b-c_Q)^m f_3\mathbf1_{3Q}:0\le m\le k\}.
\end{aligned}
\end{equation}
Apply \eqref{eq:binomial-expansion} both to the localized commutator on
\(Q\) and to those on its
children, always with the same \(c_Q\), and subtract.  Exactly as in
\eqref{eq:commutator-localization}, the difference is
\begin{equation}\label{eq:higher-localization}
\begin{aligned}
\sum_{m_1=0}^{k_1}\sum_{m_2=0}^{k_2}
(-1)^{m_1+m_2}\binom{k_1}{m_1}\binom{k_2}{m_2}
\Lambda_{\mathcal P(Q)}
\bigl((b-c_Q)^{m_1}f_1,(b-c_Q)^{m_2}f_2,
      (b-c_Q)^{k-m_1-m_2}f_3\bigr).
\end{aligned}
\end{equation}
Equations \eqref{eq:local-rough}, \eqref{eq:stopping-control}, and
\eqref{eq:higher-stopping-data} bound \eqref{eq:higher-localization} by
\begin{equation}\label{eq:higher-local-bound}
C_{d,s,k}\|\Omega\|_\infty |Q|
\sum_{m_1=0}^{k_1}\sum_{m_2=0}^{k_2}
\prod_{j=1}^3
\langle|b-c_Q|^{m_j}|f_j|\rangle_{s,3Q},
\qquad m_3=k-m_1-m_2.
\end{equation}

The stopping threshold is uniform in $s$, and the
number of stopping inputs and the binomial
coefficients depend only on $k$.
Thus the constant in \eqref{eq:higher-local-bound}
is at most $C_{d,k}A_{d,s}$.
The subsequent sparse iteration and dilation introduce
only constants depending on $d$ and $k$.
Consequently, \eqref{eq:quantitative-As} gives
\eqref{eq:quantitative-commutator-constant}.

Iterating \eqref{eq:higher-local-bound}, with
$c_L=\langle b\rangle_{3L}$ at each new top cube,
produces a $1/2$-sparse tree by
\eqref{eq:stopping-control}.
Its threefold dilation is
$(2\cdot3^d)^{-1}$-sparse, as in the first-order proof.
After relabeling the dilated cubes, each nonnegative
sum indexed by $(m_1,m_2)$ is bounded by the corresponding
supremum in \eqref{eq:higher-commutator-sparse}.
Passing to the untruncated form by
Lemma~\ref{lem:rough-form-limit} completes the proof.
\end{proof}

\section{Joint oscillation conditions for bilinear commutators}
\label{sec4}

For mixed commutators, four tests produce a nonnegative weight
from the two symbol differences. The integral defect estimates
then give the lower bound.
For repeated-position commutators of the partial transposes,
averaging the kernel in the separated variable and using four
joint-oscillation tests recovers both $S_{2,2}$ and $T_2$.
The sufficient conditions follow from sparse forms retaining
the symbol oscillations inside the local averages.

\subsection{Necessary conditions}\label{sec4.1}

Throughout this subsection,
$\Omega\in L^\infty(S^{2d-1})$ has mean zero
and satisfies
Assumption~\ref{ass:rough-geometric-nondegeneracy}.
We first derive uniform integral defect estimates,
then apply them to the mixed and repeated
commutation positions.

\begin{lem}[Integral defect estimates]
\label{lem:rough-integral-smallness}
Under Assumption~\ref{ass:rough-geometric-nondegeneracy},
there exist $\theta_0\in S^{d-1}$, a constant $A_0$,
and numbers $\varepsilon_A\ge0$, with
$\varepsilon_A\to0$ as $A\to\infty$, such that the following
holds for every $A\ge A_0$ and every ball $B=B(y_0,r)$.

Set
\[
 x_0=y_0+Ar\theta_0,\qquad
 \widetilde B=B(x_0,r),\qquad
 k_A=\frac{\kappa}{2(\sqrt2 Ar)^{2d}}.
\]
Then
\begin{equation}\label{eq:rough-integral-smallness}
\begin{aligned}
 &\underset{(y,z)\in B\times B}{\operatorname{ess\,sup}}
 \fint_{\widetilde B}
 \bigl(k_A-\operatorname{Re}(\lambda K_\Omega(x,y,z))\bigr)_+
 \,dx
 \le\varepsilon_A k_A,\\
 &\underset{(x,z)\in\widetilde B\times B}
 {\operatorname{ess\,sup}}
 \fint_B
 \bigl(k_A-\operatorname{Re}(\lambda K_\Omega(x,y,z))\bigr)_+
 \,dy
 \le\varepsilon_A k_A,\\
 &\underset{(x,y)\in\widetilde B\times B}
 {\operatorname{ess\,sup}}
 \fint_B
 \bigl(k_A-\operatorname{Re}(\lambda K_\Omega(x,y,z))\bigr)_+
 \,dz
 \le\varepsilon_A k_A.
\end{aligned}
\end{equation}
Here $t_+=\max\{t,0\}$. The choices of $\theta_0$, $A_0$,
and $\varepsilon_A$ depend only on the fixed kernel and its
non-degeneracy data, and are independent of $B$.
\end{lem}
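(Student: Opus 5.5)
The plan is to reparametrize the kernel through the midpoint variables of the discussion after Assumption~\ref{ass:rough-geometric-nondegeneracy}, show that the deficit vanishes identically whenever the direction $\theta$ lies in $F$, and then make the fraction of the averaging ball on which $\theta\notin F$ small. For $(x,y,z)\in\widetilde B\times B\times B$ write
\[
w=x-\tfrac{y+z}{2},\qquad \rho=|w|,\qquad \theta=w/\rho,\qquad h=\frac{z-y}{2\rho}.
\]
Since $w=Ar\theta_0+O(r)$, we have $(A-2)r\le\rho\le(A+2)r$, $|\theta-\theta_0|\lesssim 1/A$ and $|h|\le 1/(A-2)$. Degree-zero homogeneity and $|(\theta+h,\theta-h)|^2=2(1+|h|^2)$ give
\[
K_\Omega(x,y,z)=\frac{\Omega(\theta+h,\theta-h)}{\rho^{2d}\,2^d(1+|h|^2)^d}.
\]
Replace $F$ by $F'=\{m_{\lambda,\delta}\ge\kappa\}$ from Remark~\ref{rem:rough-lebesgue-linear}, which still has positive measure. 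Choose $\theta_0$ to be a point of density one of $F'$. For $d=1$, simply choose $\theta_0\in F'$; then $\theta=\theta_0$ exactly once $A>2$, and the deficit vanishes identically.

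\emph{Step 1: pointwise positivity.} Take $A_0$ so large that $1/(A_0-2)<\delta$ and $\bigl((A+2)^2(1+(A-2)^{-2})/A^2\bigr)^d\le 2$ for $A\ge A_0$. Then $\theta\in F'$ and the a.e.-in-$h$ inequality $\operatorname{Re}[\lambda\Omega(\theta+h,\theta-h)]\ge\kappa$ give
\[
\operatorname{Re}(\lambda K_\Omega)\ge \frac{\kappa}{2^d(A+2)^{2d}r^{2d}(1+|h|^2)^d}\ge \frac{\kappa}{2(\sqrt2Ar)^{2d}}=k_A,
\]
so the deficit is zero there. In the complementary case I only use the size bound. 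There $|K_\Omega|\le\|\Omega\|_\infty\rho^{-2d}\lesssim_d\|\Omega\|_\infty(Ar)^{-2d}$, so the deficit is at most $C_0k_A$ with $C_0\lesssim_d 1+\|\Omega\|_\infty/\kappa$.

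\emph{Step 2: measure of the bad directions.} Fix the two non-averaged variables. If $x$ is averaged, $w$ ranges over a ball of radius $r$ whose center lies within $2r$ of $y_0+Ar\theta_0-y_0=Ar\theta_0$ in the relevant translate. If $y$ or $z$ is averaged, $w$ ranges over a ball of radius $r/2$ with the same localization. In all three cases the normalized Lebesgue measure on this ball, pushed forward by $w\mapsto w/|w|$, has density bounded by $C_d\,\sigma(\mathcal C_A)^{-1}\mathbf 1_{\mathcal C_A}$, where $\mathcal C_A$ is the spherical cap of radius $C/A$ about $\theta_0$. This follows from polar coordinates: the ball sits in the shell $\rho\simeq Ar$ and its angular aperture is $\simeq 1/A$. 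Hence the averaged deficit is at most
\[
C_0k_A\cdot C_d\,\frac{\sigma(\mathcal C_A\setminus F')}{\sigma(\mathcal C_A)}=:\varepsilon_Ak_A,
\]
which tends to $0$ as $A\to\infty$ because $\theta_0$ is a density point. These choices depend only on the kernel and the non-degeneracy data, and not on $B$, since the estimate is scale and translation invariant.

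\emph{Step 3: null sets.} The exceptional set $N=\{(\theta,h):\theta\in F',\ \operatorname{Re}[\lambda\Omega(\theta+h,\theta-h)]<\kappa\}$ is null in $S^{d-1}\times B(0,\delta)$. The map $(x,y,z)\mapsto(\theta,h)$ is a smooth submersion on the region considered, so its preimage of $N$ is null in $\mathbb R^{3d}$. Fubini then gives, for a.e. choice of the two fixed variables, a null set in the averaged variable; this suffices for the essential suprema. I expect the main technical obstacle to be the uniform comparability of the pushed-forward angular density with the normalized cap measure, since it must hold for all three averaged variables and all positions of the fixed ones. I would handle it by polar coordinates centered at the origin of $w$, using $\rho\simeq Ar$ and the Jacobian bound $\rho^{d-1}d\rho\,d\sigma$.
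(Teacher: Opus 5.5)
Your proposal is correct and follows essentially the same route as the paper: the midpoint reparametrization, vanishing of the deficit for directions in the good set, the crude size bound $C_d(1+\|\Omega\|_\infty/\kappa)k_A$ elsewhere, and a density point $\theta_0$ to make the fraction of bad directions small. The differences are cosmetic. You push the averaging measure forward to the sphere and compare it with cap measure, and you pass to $F'=\{m_{\lambda,\delta}\ge\kappa\}$. The paper instead keeps $F$, converts the spherical density point once into a Euclidean density statement for the cone $\widehat F$, and rescales each average to a ball contained in $B(\theta_0,2/A)$.
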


\begin{proof}
For $d\ge2$, the Lebesgue density theorem on the sphere
provides $\theta_0\in F$ such that
\[
 \lim_{t\rightarrow0}
 \frac{\sigma(F\cap B(\theta_0,t))}
 {\sigma(S^{d-1}\cap B(\theta_0,t))}=1.
\]
Such a point is called a density point of $F$. Since small
spherical caps have measure comparable to $t^{d-1}$,
\[
 \sigma\bigl((S^{d-1}\cap B(\theta_0,t))
 \setminus F\bigr)=o(t^{d-1}).
\]
Set $\widehat F=\{\rho\theta:\rho>0,\ \theta\in F\}$.
If $\rho\theta\in B(\theta_0,t)$ and $0<t<1/2$, then
\[
\begin{aligned}
 |\rho-1|
 &=\bigl||\rho\theta|-|\theta_0|\bigr|
 \le|\rho\theta-\theta_0|<t,\\
 |\theta-\theta_0|
 \le|\theta-&\rho\theta|+|\rho\theta-\theta_0|
 =|1-\rho|+|\rho\theta-\theta_0|<2t.
\end{aligned}
\]
Thus polar coordinates yield
\[
\begin{aligned}
 |B(\theta_0,t)\setminus\widehat F|
 \le
 \int_{1-t}^{1+t}\rho^{d-1}\,d\rho\,
 \sigma\bigl((S^{d-1}\cap B(\theta_0,2t))
 \setminus F\bigr)
 =o(t^d).
\end{aligned}
\]
Dividing by $|B(\theta_0,t)|$ gives
\begin{equation}\label{eq:rough-density-point}
 \fint_{B(\theta_0,t)}
 \mathbf1_{\mathbb R^d\setminus\widehat F}
 \longrightarrow0
 \qquad(t\rightarrow0).
\end{equation}
For $d=1$, choose any $\theta_0\in F\subset\{-1,1\}$;
then $B(\theta_0,t)\subset\widehat F$ for $t<1$,
so \eqref{eq:rough-density-point} also holds.

Choose $A_0>4$ sufficiently large that
$
 (A_0-2)^{-1}<\delta,
 (1+2/A_0)^{2d}\le2.
$
Both inequalities then hold with $A_0$ replaced by
any $A\ge A_0$.
Fix $A\ge A_0$, let $B=B(y_0,r)$, and set
\[
 \widetilde B=B(y_0+Ar\theta_0,r).
\]
In particular, $B$ and $\widetilde B$ are separated.

For $(x,y,z)\in\widetilde B\times B\times B$, set
\[
 \theta=
 \frac{x-(y+z)/2}{|x-(y+z)/2|},
 \qquad
 h=\frac{z-y}{2|x-(y+z)/2|}.
\]
Since
\[
 |h|\le\frac1{A-2},
 \qquad
 \sqrt2(A-2)r
 \le |(x-y,x-z)|
 \le \sqrt2(A+2)r,
\]
our choice of $A_0$ ensures that $|h|<\delta$ and
$(1+2/A)^{2d}\le2$.
By homogeneity,
\[
 \Omega(x-y,x-z)=\Omega(\theta+h,\theta-h).
\]
Consequently, \eqref{eq:rough-geometric-nondegeneracy} gives
\[
 \operatorname{Re}\bigl[\lambda K_\Omega(x,y,z)\bigr]
 \ge
 \frac{\kappa}{[\sqrt2(A+2)r]^{2d}}
 \ge k_A
\]
for almost every triple satisfying
$x-(y+z)/2\in\widehat F$.
Here the almost-everywhere assertion follows from the smooth
coordinates
\[
 (s,\theta,h)\longmapsto
 \bigl(s(\theta+h),s(\theta-h)\bigr),
 \qquad s>0,
\]
on the set where the sum of the two displacements is nonzero.
Using the size bound for $K_\Omega$ on the remaining triples,
we obtain
\begin{equation}\label{eq:rough-defect-support}
 (k_A-\operatorname{Re}(\lambda K_\Omega(x,y,z)))_+
 \le C_d\left(1+\frac{\|\Omega\|_\infty}{\kappa}\right)k_A
 \mathbf1_{\mathbb R^d\setminus\widehat F}
       \left(x-\frac{y+z}{2}\right).
\end{equation}

Write
\[
 x=x_0+ru,\qquad
 y=y_0+rv,\qquad
 z=y_0+rw,
 \qquad u,v,w\in B(0,1).
\]
Since $\widehat F$ is a cone,
\[
 \mathbf1_{\mathbb R^d\setminus\widehat F}
 \left(x-\frac{y+z}{2}\right)
 =
 \mathbf1_{\mathbb R^d\setminus\widehat F}
 \left(
 \theta_0+\frac1A\left(u-\frac{v+w}{2}\right)
 \right).
\]
Since $u,v,w\in B(0,1)$,
\[
 \left|u-\frac{v+w}{2}\right|
 \le |u|+\frac{|v|+|w|}{2}<2.
\]
Thus the argument of the indicator belongs to
$B(\theta_0,2/A)$, regardless of which two variables
are held fixed. We now estimate the three averages separately.

First fix $y,z\in B$, or equivalently $v,w\in B(0,1)$.
Under $x=x_0+ru$, the factor $r^d$ in $dx$ cancels
with the same factor in $|\widetilde B|$.
For the subsequent change of variables
\[
 q=\theta_0+\frac1A\left(u-\frac{v+w}{2}\right),
 \qquad du=A^d\,dq,
\]
the image of $B(0,1)$ is
$B(\theta_0-(v+w)/(2A),1/A)$, which is contained in
$B(\theta_0,2/A)$. Hence
\[
\begin{aligned}
 &\fint_{\widetilde B}
 \mathbf1_{\mathbb R^d\setminus\widehat F}
 \left(x-\frac{y+z}{2}\right)\,dx\\
 &\quad=
 \frac1{|B(0,1)|}
 \int_{B(0,1)}
 \mathbf1_{\mathbb R^d\setminus\widehat F}
 \left(\theta_0+\frac1A
 \left(u-\frac{v+w}{2}\right)\right)\,du\\
 &\quad=
 \frac{A^d}{|B(0,1)|}
 \int_{B(\theta_0-(v+w)/(2A),\,1/A)}
 \mathbf1_{\mathbb R^d\setminus\widehat F}(q)\,dq\\
 &\quad\le
 \frac{A^d}{|B(0,1)|}
 \int_{B(\theta_0,2/A)}
 \mathbf1_{\mathbb R^d\setminus\widehat F}(q)\,dq\\
 &\quad=
 2^d\fint_{B(\theta_0,2/A)}
 \mathbf1_{\mathbb R^d\setminus\widehat F}(q)\,dq.
\end{aligned}
\]
The last equality uses
$|B(\theta_0,2/A)|=|B(0,1)|(2/A)^d$.

Next fix $x\in\widetilde B$ and $z\in B$.
After $y=y_0+rv$, use the change of variables
\[
 q=\theta_0+\frac{u}{A}-\frac{v+w}{2A},
 \qquad dv=(2A)^d\,dq.
\]
Its image is
$B(\theta_0+u/A-w/(2A),1/(2A))$, again contained in
$B(\theta_0,2/A)$. Consequently,
\[
\begin{aligned}
 &\fint_B
 \mathbf1_{\mathbb R^d\setminus\widehat F}
 \left(x-\frac{y+z}{2}\right)\,dy\\
 &\quad=
 \frac1{|B(0,1)|}
 \int_{B(0,1)}
 \mathbf1_{\mathbb R^d\setminus\widehat F}
 \left(\theta_0+\frac{u}{A}
 -\frac{v+w}{2A}\right)\,dv\\
 &\quad=
 \frac{(2A)^d}{|B(0,1)|}
 \int_{B(\theta_0+u/A-w/(2A),\,1/(2A))}
 \mathbf1_{\mathbb R^d\setminus\widehat F}(q)\,dq\\
 &\quad\le
 \frac{(2A)^d}{|B(0,1)|}
 \int_{B(\theta_0,2/A)}
 \mathbf1_{\mathbb R^d\setminus\widehat F}(q)\,dq\\
 &\quad=
 4^d\fint_{B(\theta_0,2/A)}
 \mathbf1_{\mathbb R^d\setminus\widehat F}(q)\,dq.
\end{aligned}
\]
Since the indicator is symmetric in $y,z$, interchanging
these variables gives, for fixed $x\in\widetilde B$ and $y\in B$,
\[
 \fint_B
 \mathbf1_{\mathbb R^d\setminus\widehat F}
 \left(x-\frac{y+z}{2}\right)\,dz
 \le
 4^d\fint_{B(\theta_0,2/A)}
 \mathbf1_{\mathbb R^d\setminus\widehat F}(q)\,dq.
\]

All three bounds are independent of the two fixed variables.
Let $C_d$ be the dimensional constant in
\eqref{eq:rough-defect-support}, and set
\[
 \varepsilon_A
 =
 4^d C_d\left(1+\frac{\|\Omega\|_\infty}{\kappa}\right)
 \fint_{B(\theta_0,2/A)}
 \mathbf1_{\mathbb R^d\setminus\widehat F}(q)\,dq.
\]
Integrating \eqref{eq:rough-defect-support} in each
variable and applying the preceding bounds proves
\eqref{eq:rough-integral-smallness}.
The assertions hold for almost every choice of the
fixed variables by Fubini's theorem, which justifies
the essential suprema.

Finally, \eqref{eq:rough-density-point}, with $t=2/A$,
gives $\varepsilon_A\to0$ as $A\to\infty$.
Its definition involves neither the center nor the radius
of $B$, so the estimates are uniform over all balls.
\end{proof}

\begin{proof}[Proof of
Theorem~\ref{thm:joint-necessary-main}\textup{(i)}]
Write
\[
 N_\Omega
 =
 \|C_{\vec b}^{1,2}(T_\Omega)\|_{L^2\times L^2\to L^1},
\]
and assume $N_\Omega<\infty$.
Choose $A\geq A_0$ in Lemma~\ref{lem:rough-integral-smallness} so large that
$C_d\varepsilon_A\leq 1/2$, where $C_d$ is the dimensional
constant arising below, and fix this choice throughout the proof.

Let $Q$ be a cube with center $c_Q$, and set
\[
    B=B(c_Q,r),\qquad r=\frac{\sqrt d}{2}\ell(Q),
\]
so that $Q\subset B$ and $|B|\lesssim_d |Q|$. Let $\widetilde B$
and $k_A$ be supplied by Lemma~\ref{lem:rough-integral-smallness}. For $f_1,f_2\in L^2$
supported in $Q$ and bounded $h$ supported in $\widetilde B$, the separated-support
kernel identity is
\begin{equation}\label{eq:rough-commutator-form}
\begin{aligned}
 &\langle C_{\vec b}^{1,2}(T_\Omega)(f_1,f_2),h\rangle\\
 &\quad=
 \iiint_{\widetilde B\times Q\times Q}
 (b_1(x)-b_1(y))(b_2(x)-b_2(z))
 K_\Omega(x,y,z)f_1(y)f_2(z)h(x)
 \,dy\,dz\,dx.
\end{aligned}
\end{equation}
Moreover,
\begin{equation}\label{eq:rough-commutator-form-bound}
 |\langle C_{\vec b}^{1,2}(T_\Omega)(f_1,f_2),h\rangle|
 \le
 N_\Omega\|f_1\|_2\|f_2\|_2\|h\|_\infty.
\end{equation}

To justify \eqref{eq:rough-commutator-form} under
the form-norm convention of Section~\ref{sec2}, set
\[
E_n=\{w\in Q:|b_1(w)|+|b_2(w)|\le n\},
\qquad
\widetilde E_n
=\{w\in\widetilde B:|b_1(w)|+|b_2(w)|\le n\},
\]
and define
\[
f_{i,n}
=f_i\mathbf1_{E_n}\mathbf1_{\{|f_i|\le n\}},
\qquad
h_n=h\mathbf1_{\widetilde E_n}.
\]
These are admissible tests, so the separated-support
kernel identity holds for them and
\[
\bigl|
\langle C_{\vec b}^{1,2}(T_\Omega)
(f_{1,n},f_{2,n}),h_n\rangle
\bigr|
\le N_\Omega\|f_1\|_2\|f_2\|_2\|h\|_\infty.
\]

Separation makes $K_\Omega$ bounded on
$\widetilde B\times Q\times Q$.
Moreover, the expanded integrand is absolutely integrable:
$b_1b_2\in L^1(\widetilde B)$,
$b_i\in L^1(\widetilde B)$, and
$b_if_i,f_i\in L^1(Q)$.
Dominated convergence therefore gives the limiting
kernel pairing and proves
\eqref{eq:rough-commutator-form-bound}.
For any bounded realization agreeing with the
admissible forms, this pairing coincides with its
operator pairing.
For $i=1,2$, put
\[
 a_i=b_i-\langle b_i\rangle_Q,\qquad
 \sigma_i=\left(\fint_Q|a_i|^2\right)^{1/2}.
\]
We will estimate $\sigma_1\sigma_2$ uniformly over $Q$.
If this product is zero, there is nothing to prove.
Otherwise, define
\[
 D_i(x)=\bigl(|a_i(x)|^2+\sigma_i^2\bigr)^{1/2},
 \qquad x\in\widetilde B.
\]
Since $\fint_Q a_1=\fint_Q a_2=0$, expanding the squares
gives, for almost every $x\in\widetilde B$,
\begin{equation}\label{eq:rough-variance-identity}
\begin{aligned}
 \int_Q |b_1(x)-b_1(y)|^2\,dy
 &= |Q||a_1(x)|^2+\int_Q |a_1(y)|^2\,dy
  = |Q|D_1(x)^2,\\
 \int_Q |b_2(x)-b_2(z)|^2\,dz
 &= |Q||a_2(x)|^2+\int_Q |a_2(z)|^2\,dz
  = |Q|D_2(x)^2.
\end{aligned}
\end{equation}

Take the input tests
\[
 f_{i,0}=\mathbf1_Q,\qquad
 f_{i,1}=\frac{\overline{a_i}}{\sigma_i}\mathbf1_Q,
 \qquad i=1,2,
\]
and the output tests
\[
\begin{aligned}
 h_{00}
 &=\frac{\overline{a_1a_2}}{D_1D_2}
   \mathbf1_{\widetilde B},
 &h_{10}
 &=-\frac{\sigma_1\overline{a_2}}{D_1D_2}
   \mathbf1_{\widetilde B},\\
 h_{01}
 &=-\frac{\sigma_2\overline{a_1}}{D_1D_2}
   \mathbf1_{\widetilde B},
 &h_{11}
 &=\frac{\sigma_1\sigma_2}{D_1D_2}
   \mathbf1_{\widetilde B}.
\end{aligned}
\]
Each input has $L^2$ norm $|Q|^{1/2}$, and each output
test has $L^\infty$ norm at most one.
On $\widetilde B\times Q\times Q$,
\[
 \sum_{\mu,\nu=0}^1
 f_{1,\mu}(y)f_{2,\nu}(z)h_{\mu\nu}(x)
 =
 \frac{
 \overline{(a_1(x)-a_1(y))(a_2(x)-a_2(z))}
 }{D_1(x)D_2(x)}.
\]
Hence \eqref{eq:rough-commutator-form} gives
\begin{equation}\label{eq:rough-four-tests}
 \sum_{\mu,\nu=0}^1
 \langle
 C_{\vec b}^{1,2}(T_\Omega)(f_{1,\mu},f_{2,\nu}),
 h_{\mu\nu}
 \rangle
 =
 \iiint K_\Omega(x,y,z)W(x,y,z)\,dy\,dz\,dx,
\end{equation}
where
\[
 W(x,y,z)
 =
 \frac{
 |b_1(x)-b_1(y)|^2|b_2(x)-b_2(z)|^2
 }{D_1(x)D_2(x)}
 \ge0.
\]
Here and below, triple integrals are over
$\widetilde B\times Q\times Q$.
By \eqref{eq:rough-variance-identity},
\begin{equation}\label{eq:rough-weight-integral}
 \iiint W
 =
 |Q|^2\int_{\widetilde B}D_1(x)D_2(x)\,dx
 <\infty.
\end{equation}
The integral is finite since $D_i\in L^2(\widetilde B)$.
By \eqref{eq:rough-commutator-form-bound}, the absolute
value of the left-hand side of
\eqref{eq:rough-four-tests} is at most $4N_\Omega|Q|$.
Multiplying by $\lambda$ and taking real parts, we obtain
\begin{equation}\label{eq:rough-main-error-estimate}
\begin{aligned}
 4N_\Omega|Q|
 \ge
 k_A|Q|^2\int_{\widetilde B}D_1D_2-
 \iiint
 \bigl(k_A-\operatorname{Re}(\lambda K_\Omega)\bigr)_+W.
\end{aligned}
\end{equation}

We now apply Lemma~\ref{lem:rough-integral-smallness}
to the error term. Since $|a_i(x)|\le D_i(x)$ and
$D_i(x)\ge\sigma_i$, $i=1,2$, we have
\[
\begin{aligned}
 \frac{|a_1(x)-a_1(y)|^2}{D_1(x)}
 &\le
 2D_1(x)+\frac{2|a_1(y)|^2}{\sigma_1},\\
 \frac{|a_2(x)-a_2(z)|^2}{D_2(x)}
 &\le
 2D_2(x)+\frac{2|a_2(z)|^2}{\sigma_2}.
\end{aligned}
\]
Consequently,
\begin{equation}\label{eq:rough-weight-majorant}
\begin{aligned}
 W(x,y,z)\le4\Bigg(
 D_1(x)D_2(x)
 +D_1(x)\frac{|a_2(z)|^2}{\sigma_2}
 +D_2(x)\frac{|a_1(y)|^2}{\sigma_1}
 +\frac{|a_1(y)|^2|a_2(z)|^2}{\sigma_1\sigma_2}
 \Bigg).
\end{aligned}
\end{equation}

For the first term, integrate first in $y$ and use
the second estimate in
\eqref{eq:rough-integral-smallness}:
\[
\begin{aligned}
&\iiint_{\widetilde B\times Q\times Q}
 \bigl(k_A-\Re(\lambda K_\Omega)\bigr)_+
 D_1(x)D_2(x)\,dy\,dz\,dx  \\
&\qquad\leq
 \varepsilon_A k_A |B||Q|
 \int_{\widetilde B}D_1D_2
 \lesssim_d
 \varepsilon_A k_A |Q|^2
 \int_{\widetilde B}D_1D_2.
\end{aligned}
\]
For the second term, the same estimate gives
\[
\begin{aligned}
&\iiint_{\widetilde B\times Q\times Q}
 \bigl(k_A-\Re(\lambda K_\Omega)\bigr)_+
 D_1(x)\frac{|a_2(z)|^2}{\sigma_2}\,dy\,dz\,dx \\
&\qquad\leq
 \varepsilon_A k_A |B||Q|\sigma_2
 \int_{\widetilde B}D_1
 \lesssim_d
 \varepsilon_A k_A |Q|^2
 \int_{\widetilde B}D_1D_2.
\end{aligned}
\]
Integrating first in $z$ and using the third estimate
in \eqref{eq:rough-integral-smallness} gives the same
bound for the third term.

For the last term, integrate first in $x$ and use
the first estimate in
\eqref{eq:rough-integral-smallness}:
\[
\begin{aligned}
&\iiint_{\widetilde B\times Q\times Q}
 \bigl(k_A-\Re(\lambda K_\Omega)\bigr)_+
 \frac{|a_1(y)|^2|a_2(z)|^2}{\sigma_1\sigma_2}\,dy\,dz\,dx \\
&\qquad\leq
 \varepsilon_A k_A |B||Q|^2\sigma_1\sigma_2
 \leq
 \varepsilon_A k_A |Q|^2
 \int_{\widetilde B}D_1D_2.
\end{aligned}
\]
Combining these estimates with
\eqref{eq:rough-weight-majorant}, we obtain
\[
\iiint_{\widetilde B\times Q\times Q}
 \bigl(k_A-\Re(\lambda K_\Omega)\bigr)_+W
 \leq
 C_d\varepsilon_A k_A|Q|^2
 \int_{\widetilde B}D_1D_2.
\]
Substituting into
\eqref{eq:rough-main-error-estimate} and using
$C_d\varepsilon_A\leq1/2$ yields
\[
4N_\Omega|Q|
 \geq \frac12 k_A|Q|^2\int_{\widetilde B}D_1D_2
 \geq \frac12 k_A|Q|^2|B|\sigma_1\sigma_2.
\]
Since $k_A|B||Q|\simeq_d\kappa A^{-2d}$, we have proved
\begin{equation}\label{eq:rough-local-ball-lower}
\left(\fint_Q|b_1-\langle b_1\rangle_Q|^2\right)^{1/2}
\left(\fint_Q|b_2-\langle b_2\rangle_Q|^2\right)^{1/2}
\lesssim_d \kappa^{-1}A^{2d}N_\Omega.
\end{equation}
Applying \eqref{eq:rough-local-ball-lower} and taking
the supremum over $Q$, we obtain
$
 S_{2,2}(b_1,b_2)\lesssim_\Omega N_\Omega.
$
Finally, H\"older's inequality gives
$T_1(b_1,b_2)\le S_{2,2}(b_1,b_2)$, which proves
\eqref{eq:S22-rough-lower}.
\end{proof}

We now prove
Theorem~\ref{thm:joint-necessary-main}\textup{(ii)}.
We first establish the equality of the two
commutator norms, then prove the lower bound
for $C_{\vec b}^{1,1}(T_\Omega^{*2})$.

\begin{proof}[Proof of
Theorem~\ref{thm:joint-necessary-main}\textup{(ii)}]
By the definitions of the partial transposes
and the bilinear pairing, expansion on admissible
tests gives
\begin{equation}\label{eq:rough-repeated-duality}
\begin{aligned}
&\bigl\langle
C_{\vec b}^{1,1}(T_\Omega^{*2})(f,g),h
\bigr\rangle\\
&\quad=
\bigl\langle
T_\Omega(f,b_1b_2h)
-T_\Omega(b_1f,b_2h)
-T_\Omega(b_2f,b_1h)
+T_\Omega(b_1b_2f,h),g
\bigr\rangle\\
&\quad=
\bigl\langle
C_{\vec b}^{2,2}(T_\Omega^{*1})(g,h),f
\bigr\rangle.
\end{aligned}
\end{equation}
Taking absolute values and suprema over admissible
triples satisfying
\[
\|f\|_2\le1,\qquad
\|h\|_2\le1,\qquad
\|g\|_\infty\le1,
\]
we obtain
\[
\|C_{\vec b}^{1,1}(T_\Omega^{*2})\|_
 {L^2\times L^\infty\to L^2}
=
\|C_{\vec b}^{2,2}(T_\Omega^{*1})\|_
 {L^\infty\times L^2\to L^2}
=:N.
\]
Assume $N<\infty$. It remains to prove the lower bound
for $C_{\vec b}^{1,1}(T_\Omega^{*2})$.

Fix $A\ge A_0$ so that $\varepsilon_A\le1/2$.
For each cube $Q$, use the auxiliary balls
$B\supset Q$ and $\widetilde B$ from part~(i), and put
\[
g=\mathbf1_{\widetilde B},
\qquad
H_Q(y,z)
=\int_{\widetilde B}K_\Omega^{*2}(z,y,x)\,dx
=\int_{\widetilde B}K_\Omega(x,y,z)\,dx.
\]
The first estimate in \eqref{eq:rough-integral-smallness}
gives, for almost every $(y,z)\in Q\times Q$,
\begin{equation}\label{eq:rough-averaged-positive}
\begin{aligned}
\operatorname{Re}(\lambda H_Q(y,z))
&\ge k_A|B|
-\int_{\widetilde B}
\bigl(k_A-\operatorname{Re}[\lambda K_\Omega(x,y,z)]\bigr)_+
\,dx\\
&\ge(1-\varepsilon_A)k_A|B|\\
&\ge\frac12k_A|B|
=\frac{c_\Omega}{|Q|},
\end{aligned}
\end{equation}
where $c_\Omega:=\frac12k_A|B||Q|>0$
is independent of $Q$.

Put
\[
a_i=b_i-\langle b_i\rangle_Q,
\qquad i=1,2,
\]
and
\[
P(y,z)
=
(a_1(y)-a_1(z))(a_2(y)-a_2(z)).
\]
The assumptions imply $a_1,a_2,a_1a_2\in L^2(Q)$,
and hence $P\in L^2(Q\times Q)$.

For $f,h\in L^2(Q)$, extended by zero outside $Q$,
and $g=\mathbf1_{\widetilde B}$, the
separated-support identity is
\begin{equation}\label{eq:rough-repeated-form}
\langle
C_{\vec b}^{1,1}(T_\Omega^{*2})(f,g),h
\rangle
=
\int_Q\int_Q
H_Q(y,z)P(y,z)f(y)h(z)\,dy\,dz.
\end{equation}
For admissible tests, this follows by expanding
\eqref{eq:rough-repeated-duality}.
For general $f,h\in L^2(Q)$, use the truncations from
part~(i). The form converges by the norm bound,
and the kernel integral converges by dominated
convergence, since $H_Q\in L^\infty(Q\times Q)$ and
\[
\|P(y,z)f(y)h(z)\|_{L^1(Q\times Q)}
\le
\|P\|_{L^2(Q\times Q)}\|f\|_2\|h\|_2.
\]

Expanding $|P|^2$ and using
$\fint_Qa_1=\fint_Qa_2=0$, we obtain
\begin{equation}\label{eq:rough-repeated-variance}
\begin{aligned}
\fint_Q\fint_Q|P(y,z)|^2\,dy\,dz
&=
2\fint_Q|a_1a_2|^2
+2\left|\fint_Qa_1a_2\right|^2\\
&\quad+
2\left(\fint_Q|a_1|^2\right)
 \left(\fint_Q|a_2|^2\right)
+2\left|\fint_Qa_1\overline{a_2}\right|^2.
\end{aligned}
\end{equation}
Discarding the two nonnegative correlation terms
and using $(s+t)^2\le2s^2+2t^2$ yields
\begin{equation}\label{eq:rough-repeated-local-oscillation}
\left(\fint_Q|a_1a_2|^2\right)^{1/2}
+
\left(\fint_Q|a_1|^2\right)^{1/2}
\left(\fint_Q|a_2|^2\right)^{1/2}
\le
\frac{\|P\|_{L^2(Q\times Q)}}{|Q|}.
\end{equation}

Take
\[
\begin{aligned}
f_0&=\mathbf1_Q,
&h_0&=\overline{a_1a_2}\mathbf1_Q,\\
f_1&=\overline{a_1}\mathbf1_Q,
&h_1&=-\overline{a_2}\mathbf1_Q,\\
f_2&=\overline{a_2}\mathbf1_Q,
&h_2&=-\overline{a_1}\mathbf1_Q,\\
f_3&=\overline{a_1a_2}\mathbf1_Q,
&h_3&=\mathbf1_Q.
\end{aligned}
\]
For $(y,z)\in Q\times Q$,
\[
\sum_{j=0}^3f_j(y)h_j(z)=\overline{P(y,z)}.
\]
Thus \eqref{eq:rough-repeated-form} gives
\begin{equation}\label{eq:rough-repeated-four-tests}
\sum_{j=0}^3
\langle
C_{\vec b}^{1,1}(T_\Omega^{*2})(f_j,g),h_j
\rangle
=
\int_Q\int_Q H_Q(y,z)|P(y,z)|^2\,dy\,dz.
\end{equation}
Moreover,
\[
\begin{aligned}
\sum_{j=0}^3\|f_j\|_2\|h_j\|_2
&=
2|Q|^{1/2}\|a_1a_2\|_{L^2(Q)}
+2\|a_1\|_{L^2(Q)}\|a_2\|_{L^2(Q)}\\
&\le
2\|P\|_{L^2(Q\times Q)},
\end{aligned}
\]
by \eqref{eq:rough-repeated-local-oscillation}.

Multiplying
\eqref{eq:rough-repeated-four-tests} by $\lambda$,
taking real parts, and using
\eqref{eq:rough-averaged-positive}, we obtain
\[
\begin{aligned}
\frac{c_\Omega}{|Q|}
\|P\|_{L^2(Q\times Q)}^2
&\le
\operatorname{Re}\left[
\lambda\sum_{j=0}^3
\langle
C_{\vec b}^{1,1}(T_\Omega^{*2})(f_j,g),h_j
\rangle
\right]\\
&\le
N\sum_{j=0}^3\|f_j\|_2\|h_j\|_2\\
&\le
2N\|P\|_{L^2(Q\times Q)}.
\end{aligned}
\]
Hence
\[
\frac{\|P\|_{L^2(Q\times Q)}}{|Q|}
\le 2c_\Omega^{-1}N.
\]
Combining it with
\eqref{eq:rough-repeated-local-oscillation}
and taking the two suprema separately yields
\[
S_{2,2}(b_1,b_2)+T_2(b_1,b_2)
\le4c_\Omega^{-1}N.
\]
This proves the lower bound for
$C_{\vec b}^{1,1}(T_\Omega^{*2})$.
The norm identity established at the beginning
of the proof gives the same bound for
$C_{\vec b}^{2,2}(T_\Omega^{*1})$,
and hence proves \eqref{eq:rough-lower-11}.
\end{proof}

\subsection{Sufficient conditions}

We prove the sufficient estimates directly from sparse
forms with explicit symbol oscillations.
The commutator bounds in
\cite{GrafakosWangXue2022,DanXue2025} are formulated under
separate $\BMO$ assumptions on the symbols.
Here the local oscillation factors are retained until
H\"older's inequality is applied, so the bounds depend
on $S_{r,t}$ and $T_u$ instead of a product of separate
$\BMO$ norms.

The two-symbol sparse estimate below follows from the
localized recursion of Section~\ref{sec3.2}, applied to
the four terms in the commutator expansion.

For a cube \(Q\), write
\[
b_i^Q:=b_i-\langle b_i\rangle_Q,
\qquad i=1,2.
\]

\begin{prop}\label{prop:two-symbol-sparse}
Assume that
$
b_1,b_2\in L^3_{\loc}(\mathbb R^d).
$
Let \(s>1\) and \(f_1,f_2,f_3\in L_c^\infty(\mathbb R^d)\).  
Then
\begin{equation}\label{eq:two-symbol-sparse}
\begin{aligned}
&\bigl|
\langle
C_{\vec b}^{1,2}(T_{\Omega})(f_1,f_2),f_3
\rangle
\bigr|\\
&\quad\lesssim A_{d,s}\|\Omega\|_\infty
   \sup_{\mathcal S}\sum_{Q\in\mathcal S}|Q|\Bigl(
   \langle |f_1|\rangle_{s,Q}
   \langle |f_2|\rangle_{s,Q}
   \langle |b_1^Q b_2^Q f_3|\rangle_{s,Q}\\
&\qquad\qquad\qquad\qquad\qquad\qquad
  +\langle |f_1|\rangle_{s,Q}
   \langle |b_2^Q f_2|\rangle_{s,Q}
   \langle |b_1^Q f_3|\rangle_{s,Q}\\
&\qquad\qquad\qquad\qquad\qquad\qquad
  +\langle |b_1^Q f_1|\rangle_{s,Q}
   \langle |f_2|\rangle_{s,Q}
   \langle |b_2^Q f_3|\rangle_{s,Q}\\
&\qquad\qquad\qquad\qquad\qquad\qquad
  +\langle |b_1^Q f_1|\rangle_{s,Q}
   \langle |b_2^Q f_2|\rangle_{s,Q}
   \langle |f_3|\rangle_{s,Q}
   \Bigr).
\end{aligned}
\end{equation}
Here \(A_{d,s}\) is the constant in
Proposition~\ref{prop:local-rough-estimate}.
\end{prop}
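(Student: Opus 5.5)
We follow the recursion used in the proof of Theorem~\ref{equ1.20}, now with two symbols. We adopt the truncation convention of Remark~\ref{rem:truncation-convention} and fix $1<s\le2$; the case $s>2$ follows by monotonicity. Choose a top cube $Q$ as in Lemma~\ref{lem:first-order-commutator-sparse}, and put $c_i=\langle b_i\rangle_{3Q}$ and $\beta_i=b_i-c_i$. The commutator is unchanged when constants are subtracted from the symbols. Expanding the kernel factor $(\beta_1(x)-\beta_1(y))(\beta_2(x)-\beta_2(z))$ therefore gives the exact identity
\[
\mathcal C_Q
=\Lambda_Q(f_1,f_2,\beta_1\beta_2f_3)
-\Lambda_Q(\beta_1f_1,f_2,\beta_2f_3)
-\Lambda_Q(f_1,\beta_2f_2,\beta_1f_3)
+\Lambda_Q(\beta_1f_1,\beta_2f_2,f_3),
\]
where $\mathcal C_Q$ is the localized mixed commutator form. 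Since $b_i\in L^3_{\loc}$, each slot function lies in $L^s(3Q)$ for $s\le 3/2$. Indeed, $\beta_1\beta_2f_3\in L^{3/2}$ and the other slots lie in $L^3$. For $3/2<s\le2$ we run the argument at $s=3/2$ and use monotonicity; alternatively, we note that only averages with exponent $s$ of these products enter, and the stated right-hand side is infinite unless they are finite.

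Next we apply \eqref{eq:stopping-control} to the stopping family
\[
\mathcal H_Q=\{f_1,f_2,f_3,\beta_1f_1,\beta_2f_2,\beta_1f_3,\beta_2f_3,\beta_1\beta_2f_3\}\mathbf1_{3Q}.
\]
This family has eight elements, so the threshold is dimensional. We write the same four-term identity on each child $L\in\mathcal P(Q)$, $L\subset Q$, keeping the constants $c_1,c_2$ of $Q$. Subtracting these identities leaves exactly four localized forms $\Lambda_{\mathcal P(Q)}$ with the same slot functions, as in \eqref{eq:commutator-localization}. Proposition~\ref{prop:local-rough-estimate} together with \eqref{eq:stopping-control} then bounds the difference $\mathcal C_Q-\sum_L\mathcal C_L$ by
\[
A_{d,s}\|\Omega\|_\infty|Q|\sum(\text{four products of }s\text{-averages over }3Q),
\]
where the oscillations are centered at $\langle b_i\rangle_{3Q}$.

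We then iterate on the children, using $c_i=\langle b_i\rangle_{3L}$ at each new top cube. The recursion stops at the lower cutoff and produces a $1/2$-sparse tree. Its threefold dilation is $(2\cdot3^d)^{-1}$-sparse, and after relabeling $3Q\to Q$ the centering $\langle b_i\rangle_{3Q}$ becomes $\langle b_i\rangle_Q$, which is exactly $b_i^Q$. Each of the four nonnegative sums is then bounded by its own supremum over sparse families. Passing to the untruncated form by Lemma~\ref{lem:rough-form-limit} completes the argument; that lemma applies since $b_1,b_2\in L^3_{\loc}$.

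The main obstacle is the integrability bookkeeping. The slot $\beta_1\beta_2f_3$ lies only in $L^{3/2}$, so we must make sure the localized estimate is applied in a range where all stopping inputs are in $L^s(3Q)$. We also have to check that the exact telescoping identity still holds when one slot carries a product of the two symbols. The algebra itself is the same as in \eqref{eq:binomial-expansion}, with the binomial expansion replaced by the product of two distinct linear factors.
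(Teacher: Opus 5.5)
Your proposal is correct and follows the paper's proof essentially step for step. It uses the same four-term identity with constants $\langle b_i\rangle_{3Q}$, the same eight-function stopping family, telescoping against the children with the parent's constants, and resetting the constants and dilating the $1/2$-sparse tree. The only difference is cosmetic: the paper caps the exponent at $s\le 4/3$ rather than $s\le 3/2$, and either works, since $\beta_1\beta_2f_3\in L^{3/2}(3Q)$ and $A_{d,3/2}\lesssim_d 1\le A_{d,s}$ in the monotonicity step.
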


\begin{proof}
     We first prove the estimate for $1<s\le4/3$.
Since $b_1,b_2\in L^3_{\loc}$ and
$b_1b_2\in L^{3/2}_{\loc}$, every stopping function below
belongs to $L^s(3Q)$. For $s>4/3$, the estimate at $s=4/3$
and monotonicity of normalized averages give the result,
after enlarging the dimensional implicit constant.

Use the truncation convention.
For arbitrary constants
\(c_1,c_2\in\mathbb C\), one has
\begin{equation}\label{eq:two-symbol-identity}
\begin{aligned}
&\langle
C_{\vec b}^{1,2}(T)(f_1,f_2),f_3
\rangle\\
&=\Lambda(f_1,f_2,(b_1-c_1)(b_2-c_2)f_3)
 -\Lambda((b_1-c_1)f_1,f_2,(b_2-c_2)f_3)\\
&\quad
 -\Lambda(f_1,(b_2-c_2)f_2,(b_1-c_1)f_3)
 +\Lambda((b_1-c_1)f_1,(b_2-c_2)f_2,f_3).
\end{aligned}
\end{equation}

Fix a top cube \(Q\) as in the proof of
Lemma~\ref{lem:first-order-commutator-sparse}, and set
\[
c_{i,Q}:=\langle b_i\rangle_{3Q},\qquad i=1,2.
\]
Let \(\mathcal C_Q\) denote the localized version of the left-hand side
of \eqref{eq:two-symbol-identity}, with each occurrence of
\(\Lambda\) replaced by \(\Lambda_Q\).

Apply the stopping-time construction simultaneously to
\[
f_1,\quad (b_1-c_{1,Q})f_1,
\]
\[
f_2,\quad (b_2-c_{2,Q})f_2,
\]
and
\[
f_3,\quad (b_1-c_{1,Q})f_3,\quad
(b_2-c_{2,Q})f_3,\quad
(b_1-c_{1,Q})(b_2-c_{2,Q})f_3,
\]
all restricted to \(3Q\).

For each stopping child $L\subset Q$, expand
$\mathcal C_L$ using the same constants $c_{1,Q},c_{2,Q}$.
Subtracting the child identities from the identity on $Q$
replaces each occurrence of $\Lambda_Q$ by
$\Lambda_{\mathcal P(Q)}$.

Proposition~\ref{prop:local-rough-estimate} and
\eqref{eq:stopping-control} then bound this difference
by the sum of the four products in
\eqref{eq:two-symbol-sparse}, with every averaging cube
replaced by $3Q$ and with prefactor
$A_{d,s}\|\Omega\|_\infty|Q|$.

Iterate the estimate over the stopping children, resetting
$c_{i,L}=\langle b_i\rangle_{3L}$ at each new top cube.
The recursion terminates once the local upper scale is at
most $\mu$. The resulting tree is $1/2$-sparse.
Its threefold dilation is $(2\cdot3^d)^{-1}$-sparse by the
argument following \eqref{eq:first-local-bound}.
Relabeling the dilated cubes proves
\eqref{eq:two-symbol-sparse}.
\end{proof}

We recall the following equivalent formulation of the standard
major-subset characterization of weak Lorentz spaces;
see \cite[Lemma~3.31]{MR4647944}.

\begin{lem}\label{lemm4.4}
Let $0<r<1$ and $F\in L^1_{\loc}(\mathbb R^d)$.
Then
\[
\|F\|_{L^{r,\infty}}
\simeq_r
\sup_{\substack{G\subset\mathbb R^d\ \mathrm{bounded}\\
                 0<|G|<\infty}}
\inf_{\substack{G'\subset G\\|G'|\ge|G|/2}}
|G|^{1/r-1}
\sup_{|h|\le\mathbf1_{G'}}
|\langle F,h\rangle|,
\]
where all sets are measurable.
\end{lem}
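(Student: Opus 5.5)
The plan is to prove the two inequalities separately. Both follow directly from the definition of $\|F\|_{L^{r,\infty}}$, by comparing with the level sets $\{|F|>\lambda\}$. Denote the right-hand side by $R(F)$. Throughout, the pairing $\langle F,h\rangle$ is well defined because $F\in L^1_{\loc}$ and $h$ is bounded and supported in a bounded set.

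\emph{Upper bound $R(F)\lesssim_r\|F\|_{L^{r,\infty}}$.} We may assume the right side is finite and nonzero. Fix a bounded $G$ with $0<|G|<\infty$. Choose the height
\[
\lambda=\Bigl(\frac{2}{|G|}\Bigr)^{1/r}\|F\|_{L^{r,\infty}},
\]
so that the definition of the weak quasi-norm gives
\[
|\{|F|>\lambda\}|\le\lambda^{-r}\|F\|_{L^{r,\infty}}^r=\frac{|G|}{2}.
\]
Take $G'=G\setminus\{|F|>\lambda\}$; then $|G'|\ge|G|/2$. For every $|h|\le\mathbf 1_{G'}$ we have $|F|\le\lambda$ on the support of $h$, and hence
\[
|\langle F,h\rangle|\le\lambda|G|.
\]
Multiplying by $|G|^{1/r-1}$ gives the bound $2^{1/r}\|F\|_{L^{r,\infty}}$, uniformly in $G$. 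This proves the upper bound.

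\emph{Lower bound $\|F\|_{L^{r,\infty}}\lesssim_r R(F)$.} Fix $\lambda>0$ and set $E=\{|F|>\lambda\}$. For $n\in\mathbb N$, put $G_n=E\cap B(0,n)$, which is bounded with finite measure; assume $|G_n|>0$, since otherwise there is nothing to prove. Let $G'\subset G_n$ be any subset with $|G'|\ge|G_n|/2$, and take the unimodular test
\[
h=\frac{\overline F}{|F|}\,\mathbf 1_{G'},
\]
which is admissible. Then
\[
\langle F,h\rangle=\int_{G'}|F|\ge\lambda|G'|\ge\frac{\lambda}{2}|G_n|.
\]
Since $G'$ was arbitrary, the infimum over $G'$ in $R(F)$ is at least $\tfrac{\lambda}{2}|G_n|$, and therefore
\[
R(F)\ge|G_n|^{1/r-1}\,\frac{\lambda}{2}\,|G_n|=\frac12\lambda|G_n|^{1/r}.
\]
Letting $n\to\infty$ and using monotone convergence gives $|G_n|\to|E|$, so
\[
\lambda|E|^{1/r}\le2R(F).
\]
This also covers $|E|=\infty$, in which case $R(F)=\infty$. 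Taking the supremum over $\lambda$ completes the proof.

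\emph{Main subtlety.} The only point needing care is the order of quantifiers: the infimum over major subsets $G'$ sits inside the supremum over $G$. In the upper bound we must exhibit one good $G'$, namely the complement of a level set. In the lower bound we need that every $G'$ still carries half of the level set, which holds because $G$ was chosen inside that level set. The restriction to bounded $G$ is handled by the exhaustion $G_n=E\cap B(0,n)$.
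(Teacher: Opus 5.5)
Your proof is correct, with explicit constants $R(F)\le 2^{1/r}\|F\|_{L^{r,\infty}}$ and $\|F\|_{L^{r,\infty}}\le 2R(F)$. The paper gives no proof of this lemma. It cites it as an equivalent form of the standard major-subset characterization in Oikari's Lemma~3.31, and your argument is the standard level-set proof of that characterization. For the upper bound you remove $\{|F|>\lambda\}$ with $\lambda=(2/|G|)^{1/r}\|F\|_{L^{r,\infty}}$. For the lower bound you test with the phase $\overline{F}/|F|$ on arbitrary major subsets of $\{|F|>\lambda\}\cap B(0,n)$. Your version also shows that the paper's departures from the textbook statement cost nothing: the restriction to bounded $G$ is absorbed by the exhaustion $G_n$, and the hypothesis $0<r<1$ is never used, so the equivalence holds for every $0<r<\infty$.
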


For \(1\le r_1,r_2<\infty\), set
\[
M_{r_1,r_2}(f_1,f_2)(x)
:=
\sup_{Q\ni x}
\langle |f_1|\rangle_{r_1,Q}
\langle |f_2|\rangle_{r_2,Q}.
\]

The following standard estimate follows from the pointwise bound
$M_{r_1,r_2}(f_1,f_2)\leq M_{r_1}f_1\,M_{r_2}f_2$,
the Hardy--Littlewood maximal theorem, and H\"older's inequality
(applied to the $p$-th power when $p<1$).
See also \cite{MR2483720} for the multilinear maximal framework.
\begin{lem}\label{lemm4.7}
Suppose that \(1\le r_i<p_i\le\infty\), \(i=1,2\), and
\(1/p=1/p_1+1/p_2 >0\).  Then
\[
 \|M_{r_1,r_2}(f_1,f_2)\|_{L^{p,\infty}}
 \lesssim
 \|f_1\|_{L^{p_1}}\|f_2\|_{L^{p_2}}.
\]
\end{lem}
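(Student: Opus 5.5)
The plan is to reduce the bilinear maximal function to a product of linear maximal functions and then apply H\"older's inequality. We start from the pointwise bound
\[
M_{r_1,r_2}(f_1,f_2)(x)\le M_{r_1}f_1(x)\,M_{r_2}f_2(x).
\]
It holds because, for every cube $Q\ni x$, each normalized average $\langle|f_i|\rangle_{r_i,Q}$ is at most $M_{r_i}f_i(x)$. Taking the supremum over $Q$ gives the bound, so it suffices to estimate the product $M_{r_1}f_1\,M_{r_2}f_2$ in $L^{p,\infty}$.

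\textbf{Maximal bounds.} Since $M_{r_i}f=(M|f|^{r_i})^{1/r_i}$, the Hardy--Littlewood maximal theorem applied to $|f_i|^{r_i}\in L^{p_i/r_i}$ gives
\[
\|M_{r_i}f_i\|_{L^{p_i}}\lesssim\|f_i\|_{L^{p_i}}.
\]
This uses $p_i/r_i>1$, which holds because $r_i<p_i$. When $p_i=\infty$ the bound is trivial.

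\textbf{H\"older step.} We first bound the strong $L^p$ quasi-norm, since $\|\cdot\|_{L^{p,\infty}}\le\|\cdot\|_{L^p}$.
\begin{itemize}
\item If $p\ge1$, H\"older's inequality with $1/p=1/p_1+1/p_2$ gives $\|M_{r_1}f_1M_{r_2}f_2\|_p\le\|M_{r_1}f_1\|_{p_1}\|M_{r_2}f_2\|_{p_2}$.
\item If $p<1$, write $\int(F_1F_2)^p$ and apply H\"older with exponents $p_1/p$ and $p_2/p$, which are conjugate because $p/p_1+p/p_2=1$. This yields $\|F_1F_2\|_p\le\|F_1\|_{p_1}\|F_2\|_{p_2}$.
\item If one exponent is infinite, say $p_2=\infty$, then $p=p_1$ and the product is simply bounded by $\|M_{r_2}f_2\|_\infty\,M_{r_1}f_1$.
\end{itemize}
Combining the pointwise bound, the maximal bounds and the H\"older step gives the claimed weak (indeed strong) estimate.

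\textbf{Main obstacle.} There is no genuine difficulty. The only points needing care are the exponent bookkeeping in the quasi-Banach case $p<1$ and the degenerate endpoint cases where $p_i=\infty$. The condition $1/p>0$ excludes the case $p_1=p_2=\infty$, and the implicit constant depends only on $d$, $r_i$ and $p_i$.
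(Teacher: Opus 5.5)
Your proposal is correct and follows the same route as the paper. It combines the pointwise bound $M_{r_1,r_2}(f_1,f_2)\le M_{r_1}f_1\,M_{r_2}f_2$, the Hardy--Littlewood maximal theorem applied to $|f_i|^{r_i}\in L^{p_i/r_i}$, and H\"older's inequality (applied to the $p$-th powers when $p<1$); this actually gives the strong $L^p$ bound, and the weak bound follows.
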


\begin{proof}[Proof of Theorem~\ref{thmm4.5}]
Fix $\varepsilon>0$ and initially take
$f_1,f_2\in L^\infty_c(\mathbb R^d)$.
By Lemma~\ref{lem:rough-form-limit}, the resulting
commutator belongs to $L^1_{\loc}$.

\smallskip
\noindent\emph{The Banach range.}
Assume first that \(1\le p<\infty\), and put
\[
 A=p+\varepsilon,\qquad
 B=\max\{p_1'+\varepsilon,p+\varepsilon\},\qquad
 C=\max\{p_2'+\varepsilon,p+\varepsilon\}.
\]
Since \(A>p\), \(B>\max\{p_1',p\}\), and
\(C>\max\{p_2',p\}\), we may choose \(s>1\) sufficiently close to \(1\)
so that, with
\[
 \frac1{r_0}=\frac1s-\frac1A,\qquad
 \frac1{r_1}=\frac1s-\frac1B,\qquad
 \frac1{r_2}=\frac1s-\frac1C,
\]
one has
\[
 r_0<p',\qquad
 r_1<\min\{p_1,p'\},\qquad
 r_2<\min\{p_2,p'\},\qquad
 s<\min\{p_1,p_2,p'\},
\]
with the usual interpretation when \(p'=\infty\).

Let \(\mathcal R_1,\ldots,\mathcal R_4\) denote the four sparse sums in
\eqref{eq:two-symbol-sparse}.  H\"older's inequality inside each cube,
the disjoint major subsets of the sparse family, and the boundedness of
the maximal operators give
\begin{align*}
\mathcal R_1
&\lesssim T_A(b_1,b_2)
 \|M_s f_1\|_{L^{p_1}}
 \|M_s f_2\|_{L^{p_2}}
 \|M_{r_0}f_3\|_{L^{p'}},\\
\mathcal R_2
&\lesssim S_{B,C}(b_1,b_2)
 \|M_s f_1\|_{L^{p_1}}
 \|M_{r_2}f_2\|_{L^{p_2}}
 \|M_{r_1}f_3\|_{L^{p'}},\\
\mathcal R_3
&\lesssim S_{B,C}(b_1,b_2)
 \|M_{r_1}f_1\|_{L^{p_1}}
 \|M_s f_2\|_{L^{p_2}}
 \|M_{r_2}f_3\|_{L^{p'}},\\
\mathcal R_4
&\lesssim S_{B,C}(b_1,b_2)
 \|M_{r_1}f_1\|_{L^{p_1}}
 \|M_{r_2}f_2\|_{L^{p_2}}
 \|M_s f_3\|_{L^{p'}}.
\end{align*}
Consequently, by \eqref{eq:two-symbol-sparse} and duality,
\begin{equation}\label{eq:banach-sufficient}
\|C_{\vec b}^{1,2}(T_\Omega)(f_1,f_2)\|_{L^p}
\lesssim A_{d,s}\|\Omega\|_\infty
 \bigl(T_{p+\varepsilon}(b_1,b_2)+S_{B,C}(b_1,b_2)\bigr)
 \prod_{i=1}^2\|f_i\|_{L^{p_i}}.
\end{equation}

\smallskip
\noindent\emph{The quasi-Banach range.}
Assume now that $1/2<p<1$.
Then both input exponents are finite. Set
\[
 B=p_1'+\varepsilon,\qquad C=p_2'+\varepsilon.
\]
Choose \(s>1\) sufficiently close to \(1\) so that
\begin{equation}\label{eq:quasi-parameter-choice}
\begin{aligned}
s<1+\varepsilon,\qquad s<p_i,
\qquad \frac{s p_i}{p_i-s}<p_i'+\varepsilon
,\qquad i=1,2.
\end{aligned}
\end{equation}
Define \(r_1,r_2\) by
\[
 \frac1{r_1}=\frac1s-\frac1B,\qquad
 \frac1{r_2}=\frac1s-\frac1C.
\]
Then \(s<r_i<p_i\).  The choice in
\eqref{eq:quasi-parameter-choice} is possible because
\[
 \lim_{s\rightarrow1}\frac{s p_i}{p_i-s}=p_i'.
\]

Fix a bounded measurable set $G$ with $0<|G|<\infty$.
By Lemma~\ref{lemm4.4}, it suffices to find
$G'\subset G$, $|G'|\ge|G|/2$, for which the required
pairing estimate holds uniformly over
$|h|\le\mathbf1_{G'}$.
Put
\[
 \mathcal M=M_{r_1,r_2}(f_1,f_2),\qquad
 N=\|\mathcal M\|_{L^{p,\infty}}.
\]
If \(N=0\), then \(\mathcal M=0\) almost everywhere and the desired
estimate is immediate. Hence we may assume that \(N>0\).
For a sufficiently large constant $c_{d,p}$, let
\[
E=\{x:\mathcal M(x)>c_{d,p}|G|^{-1/p}N\},
\qquad
F=\{x:M\mathbf1_E(x)\ge\tfrac12\}.
\]
By Lemma~\ref{lemm4.7} and the weak type \((1,1)\) estimate for \(M\),
choosing \(c_{d,p}\) sufficiently large gives
$
|F|\leq \frac12|G|.
$
Set
$
G'=G\setminus F.
$
This choice of \(G'\) is independent of the sparse family.

Let \(h\) satisfy \(|h|\le\mathbf1_{G'}\).  
Since $s<1+\varepsilon$, H\"older's inequality with
conjugate exponents
\[
\frac{1+\varepsilon}{s}
\quad\text{and}\quad
\frac{1+\varepsilon}{1+\varepsilon-s}
\]
gives
\[
 \langle |b_1^Q b_2^Q h|\rangle_{s,Q}
 \le T_{1+\varepsilon}(b_1,b_2)
 \langle\mathbf1_{G'}\rangle_{
 \frac{s(1+\varepsilon)}{1+\varepsilon-s},Q}.
\]
For the other three forms, H\"older's inequality with the pairs
\((B,r_1)\) and \((C,r_2)\) gives the factor
\(S_{B,C}(b_1,b_2)\).  Since every average of \(\mathbf1_{G'}\) is at
most \(1\), all four forms are bounded by
\begin{equation}\label{eq:four-forms-common-bound}
 \bigl(T_{1+\varepsilon}(b_1,b_2)+S_{B,C}(b_1,b_2)\bigr)
 \sum_{Q\in\mathcal S_*}
 \langle |f_1|\rangle_{r_1,Q}
 \langle |f_2|\rangle_{r_2,Q}|Q|,
\end{equation}
where \(\mathcal S_*\) consists of the cubes whose original summand is
nonzero.  In particular, every \(Q\in\mathcal S_*\) meets \(G'\), and
hence
\begin{equation}\label{eq:active-cubes-good}
 |Q\cap E|<\frac12|Q|,\qquad |Q|\le2|Q\cap E^c|.
\end{equation}
First suppose that $\mathcal S_*$ is finite.
Write
\[
a_Q=\langle|f_1|\rangle_{r_1,Q}
    \langle|f_2|\rangle_{r_2,Q},
\]
and let $\kappa>0$ be the fixed sparsity parameter.
For $t>0$, set
\[
\mathcal A_t=\{Q\in\mathcal S_*:a_Q>t\},
\qquad U_t=\bigcup_{Q\in\mathcal A_t}Q.
\]
Disjoint sparse witnesses give
\[
\sum_{Q\in\mathcal A_t}|Q|\le\kappa^{-1}|U_t|.
\]
By a greedy selection in decreasing order of side length,
choose a finite pairwise disjoint family
$\{P_\ell\}_\ell\subset\mathcal A_t$
such that
$U_t\subset\bigcup_\ell3P_\ell$.
Using \eqref{eq:active-cubes-good}, we obtain
\[
\begin{aligned}
|U_t|
\le3^d\sum_\ell|P_\ell|
\le2\cdot3^d\sum_\ell|P_\ell\cap E^c|
\le2\cdot3^d|E^c\cap\{\mathcal M>t\}|.
\end{aligned}
\]
The last inequality holds because $a_{P_\ell}>t$ implies
$P_\ell\subset\{\mathcal M>t\}$.
Integrating in $t$ yields
\begin{equation}\label{eq:principal-cube-estimate}
\begin{aligned}
\sum_{Q\in\mathcal S_*}a_Q|Q|
&=\int_0^\infty\sum_{Q\in\mathcal A_t}|Q|\,dt\\
&\le\frac{2\cdot3^d}{\kappa}
\int_0^\infty|E^c\cap\{\mathcal M>t\}|\,dt\\
&=\frac{2\cdot3^d}{\kappa}\int_{E^c}\mathcal M.
\end{aligned}
\end{equation}
The general case follows by monotone convergence.

By the definition of \(E\) and the weak-\(L^p\) distribution estimate,
\begin{align*}
\int_{E^c}\mathcal M
&\le N^p\int_0^{c_{d,p}|G|^{-1/p}N}t^{-p}\,dt\\
&\lesssim_p |G|^{1-1/p}N.
\end{align*}
Combining this with Lemma~\ref{lemm4.7},
\eqref{eq:two-symbol-sparse}, \eqref{eq:four-forms-common-bound}, and
\eqref{eq:principal-cube-estimate}, we obtain, uniformly for
\(|h|\le\mathbf1_{G'}\),
\begin{align*}
|\langle C_{\vec b}^{1,2}(T_\Omega)(f_1,f_2),h\rangle|
&\lesssim A_{d,s}\|\Omega\|_\infty
 \bigl(T_{1+\varepsilon}(b_1,b_2)
      +S_{p_1'+\varepsilon,p_2'+\varepsilon}(b_1,b_2)\bigr)\\
&\qquad \qquad\times |G|^{1-1/p}
 \|f_1\|_{L^{p_1}}\|f_2\|_{L^{p_2}}.
\end{align*}
Lemma~\ref{lemm4.4} now yields
\begin{equation}\label{eq:quasi-sufficient}
\|C_{\vec b}^{1,2}(T_\Omega)(f_1,f_2)\|_{L^{p,\infty}}
\lesssim A_{d,s}\|\Omega\|_\infty
 \bigl(T_{1+\varepsilon}(b_1,b_2)
      +S_{p_1'+\varepsilon,p_2'+\varepsilon}(b_1,b_2)\bigr)
 \prod_{i=1}^2\|f_i\|_{L^{p_i}}.
\end{equation}

The preceding estimates prove the asserted bounds for
bounded compactly supported inputs. When both input
exponents are finite, extension follows by density.

Suppose now that $p_2=\infty$. Then $p=p_1>1$.
Fix bounded compactly supported functions $f_1$ and $h$.
On every fixed cube, the functional
$
f_2\longmapsto
\langle C_{\vec b}^{1,2}(T_\Omega)(f_1,f_2),h\rangle
$
is continuous in the $L^6$ norm of $f_2$.

Indeed, local H\"older estimates place the
four expanded terms in the exponent triples
$(6,6,3/2)$ and $(3,6,2)$, up to permutations.
Lemma~\ref{lem:rough-form-limit} therefore
gives local $L^6$ continuity in $f_2$.
It is therefore represented by a locally integrable
function $\varphi$, with the local representations agreeing
on overlaps.

The bound already proved for bounded compactly supported
$f_2$ gives
\[
\left|\int_{\mathbb R^d}\varphi f_2\right|
\lesssim
A_{d,s}\|\Omega\|_\infty
\Bigl(
T_{p+\varepsilon}(b_1,b_2)
+
S_{\max\{p'+\varepsilon,p+\varepsilon\},\,p+\varepsilon}
(b_1,b_2)
\Bigr)
\|f_1\|_{p_1}\|h\|_{p'}\|f_2\|_\infty.
\]
Testing with bounded compactly supported approximations
to the complex sign of $\varphi$ shows that
$\varphi\in L^1(\mathbb R^d)$, with the corresponding
$L^1$ bound. Thus the pairing extends to all
$f_2\in L^\infty$ by integration against $\varphi$.
Duality in $L^p$, followed by density in the first input,
gives the claimed operator on
$L^{p_1}\times L^\infty$.
This extension is weak-$\ast$ continuous in the second
input at the level of scalar pairings.
The case $p_1=\infty$ is identical.

This completes the proof.
\end{proof}

\begin{proof}[Proof of
Theorem~\ref{thm:diagonal-sufficient-main}]
Use the truncation convention.
The first-position commutator satisfies
\begin{align*}
&\langle
C_{\vec b}^{1,1}(T)(f_1,f_2),f_3
\rangle\\
&=\Lambda(f_1,f_2,(b_1-c_1)(b_2-c_2)f_3)
 -\Lambda((b_2-c_2)f_1,f_2,(b_1-c_1)f_3)\\
&\quad
 -\Lambda((b_1-c_1)f_1,f_2,(b_2-c_2)f_3)
 +\Lambda((b_1-c_1)(b_2-c_2)f_1,f_2,f_3).
\end{align*}
Repeating the proof of Proposition~\ref{prop:two-symbol-sparse} with
the identity above, and enlarging the stopping data so as to include
every function appearing in the four triples below, yields the
corresponding four sparse products.
\[
\begin{gathered}
(f_1,f_2,b_1^Q b_2^Q f_3),\qquad
(b_2^Q f_1,f_2,b_1^Q f_3),\\
(b_1^Q f_1,f_2,b_2^Q f_3),\qquad
(b_1^Q b_2^Q f_1,f_2,f_3).
\end{gathered}
\]
Here, as usual, \(b_k^Q=b_k-\langle b_k\rangle_Q\).  
For \(i=2\), the same argument starts from the corresponding
four-term identity and yields the sparse products associated with
\[
\begin{gathered}
(f_1,f_2,b_1^Q b_2^Q f_3),\qquad
(f_1,b_2^Q f_2,b_1^Q f_3),\\
(f_1,b_1^Q f_2,b_2^Q f_3),\qquad
(f_1,b_1^Q b_2^Q f_2,f_3).
\end{gathered}
\]

Suppose first that $p\ge1$ and put $m=m_1$.
Since $m>\max\{p,p_1'\}$, we may choose $s>1$
close enough to $1$ that
\[
\frac1r=\frac1s-\frac1m,
\qquad
r<\min\{p_1,p'\},
\qquad
s<\min\{p_1,p_2,p'\}.
\]
H\"older's inequality in each cube bounds the sum of the four sparse
products by
\[
\bigl(T_m(b_1,b_2)+S_{m,m}(b_1,b_2)\bigr)
\sum_{Q\in\mathcal S}
\langle|f_1|\rangle_{r,Q}
\langle|f_2|\rangle_{s,Q}
\langle|f_3|\rangle_{r,Q}|Q|.
\]
The sparse embedding and duality prove
\eqref{eq:diagonal-sufficient} in this range.

If \(1/2<p<1\), then \(m_1=p_1'+\varepsilon\).  Choose \(s>1\) sufficiently
close to \(1\) so that \(s<1+\varepsilon\), \(s<p_2\), and
\[
 \frac1r=\frac1s-\frac1{m_1}>\frac1{p_1}.
\]
The first sparse product is controlled by
\(T_{1+\varepsilon}\le T_{m_1}\), the fourth by \(T_{m_1}\), and the two
mixed products by \(S_{m_1,m_1}\).  
After dropping the averages of the indicator of the major subset, all
four sparse forms are therefore bounded by
\[
\bigl(T_{m_1}(b_1,b_2)+S_{m_1,m_1}(b_1,b_2)\bigr)
\sum_{Q\in\mathcal S_*}
\langle |f_1|\rangle_{r,Q}
\langle |f_2|\rangle_{s,Q}|Q|.
\]
Since \(r<p_1\) and \(s<p_2\), the packing argument in the
proof of Theorem~\ref{thmm4.5}, applied to
\(M_{r,s}(f_1,f_2)\), gives the required weak-\(L^p\) estimate.

For \(i=2\), the same argument is applied with \(f_1,p_1\) and
\(f_2,p_2\) interchanged.  This replaces \(m_1\) by
\(m_2=\max\{p+\varepsilon,p_2'+\varepsilon\}\) 
and proves the corresponding estimate for bounded
compactly supported inputs.

When both input exponents are finite, the extension
follows by density.
If one input exponent is infinite, use the
scalar-pairing construction at the end of the proof
of Theorem~\ref{thmm4.5}.
The only additional local integrability pattern occurs
when both symbols multiply the varying input.
On a fixed cube, H\"older's inequality gives
\[
\|b_1b_2f_i\|_{6/5}
\le
\|b_1\|_3\|b_2\|_3\|f_i\|_6,
\qquad
\frac{5}{6}+\frac{1}{12}+\frac{1}{12}=1.
\]
Thus Lemma~\ref{lem:rough-form-limit}, with exponent
triple $(6/5,12,12)$ and the appropriate permutation,
gives local $L^6$ continuity in this case as well.
The resulting scalar functional is represented by
an $L^1$ function, by the already proved uniform
$L^\infty$ bound.
This yields the required extension to a general
$L^\infty$ input.
\end{proof}

\bibliographystyle{plain}
\bibliography{ams}

\end{document}